\numberwithin{equation}{section}
\DeclareSymbolFont{cyrletters}{OT2}{wncyr}{m}{n}
\DeclareMathSymbol{\Sha}{\mathalpha}{cyrletters}{"58}
\newcommand{\hGL}[1]{\widehat{\operatorname{GL}_2}(#1)}
\newcommand{\Z}{\mathbb{Z}}
\newcommand{\angp}{\langle p\rangle}
\newcommand{\Dang}{\op{D}_{\bar{\rho}}^{\angp}}
\newcommand{\Rang}{\op{R}_{\bar{\rho}}^{\angp}}
\newcommand{\Q}{\mathbb{Q}}
\newcommand{\g}{\operatorname{Ad}^0\bar{\rho}}
\newcommand{\op}[1]{\operatorname{#1}}
\newcommand{\F}{\mathbb{F}}
\newcommand\mtx[4] { \left( {\begin{array}{cc}
   #1 & #2 \\
   #3 & #4 \\
  \end{array} } \right)}
\theoremstyle{plain}
 \theoremstyle{definition}
\newtheorem{Th}{Theorem}[section]
\newtheorem{Lemma}[Th]{Lemma}
\newtheorem{hypothesis}[Th]{Hypothesis}
\newtheorem{Corollary}[Th]{Corollary}
\newtheorem{Question}[Th]{Question}
\newtheorem{Proposition}[Th]{Proposition}
\newtheorem{Remark}[Th]{Remark}
 \theoremstyle{definition}
\newtheorem{Definition}[Th]{Definition}
\begin{document}

\title[Statistics for Galois deformation rings]{Arithmetic Statistics for Galois deformation rings}
\author[A.~Ray]{Anwesh Ray}
\address[Ray]{Chennai Mathematical Institute, H1, SIPCOT IT Park, Kelambakkam, Siruseri, Tamil Nadu 603103, India}
\email{anwesh@cmi.ac.in}
  
\author[T.~Weston]{Tom Weston}
\address[Weston]{Department of Mathematics, University of Massachusetts, Amherst, MA, USA.} 
\email{weston@math.umass.edu}
  
\begin{abstract}
Given an elliptic curve $E$ defined over the rational numbers and a prime $p$ at which $E$ has good reduction, we consider the Galois deformation ring parametrizing lifts of the residual representation on the $p$-torsion group $E[p]$. The deformations considered are subject to the flat condition at $p$. For a fixed elliptic curve without complex multiplication, it is shown that these deformation rings are unobstructed for all but finitely many primes. For a fixed prime $p$ and varying elliptic curve $E$, we relate the problem to the question of how often $p$ does not divide the modular degree. Heuristics due to M.Watkins based on those of Cohen and Lenstra indicate that this proportion should be $\prod_{i\geq 1} \left(1-\frac{1}{p^i}\right)\approx 1-\frac{1}{p}-\frac{1}{p^2}$. This heuristic is supported by computations which indicate that most elliptic curves (satisfying further conditions) have smooth deformation rings at a given prime $p\geq 5$, and this proportion comes close to $100\%$ as $p$ gets larger.
\end{abstract}

\maketitle
\section{Introduction}
\par Given an elliptic curve $E$ defined over the rationals, and a prime number $p$, denote by $E[p]$ the $p$-torsion subgroup of $E(\bar{\Q})$. B.Mazur introduced deformation functors, parametrizing lifts of the residual representation \[\bar{\rho}:\op{Gal}(\bar{\Q}/\Q)\rightarrow \op{GL}_2(\F_p)\] on $E[p]$. These functors are represented by universal deformation rings and their study gained considerable momentum in \cite{wilesflt, TW, BCDT}, where the modularity of an elliptic curve $E_{/\Q}$ is established. The approach involved showing that a certain deformation ring associated to $\bar{\rho}$ is in fact isomorphic to a localized Hecke algebra associated to a space of modular forms. A result of this flavor which establishes an isomorphism between a deformation ring and a localized Hecke algebra is known as an $\op{R}=\mathbb{T}$ theorem. Here, $\op{R}$ refers to the deformation ring associated to a residual representation $\bar{\rho}$ and $\mathbb{T}$ is the associated localized Hecke algebra. The Galois representations which arise from elliptic curves and modular forms satisfy additional local conditions, and hence, the deformations functors of interest are subject to a local constraint at $p$, which is defined using $p$-adic Hodge theory. There has been considerable interest in extending such $\op{R}=\mathbb{T}$ results to more general Galois representations arising from cuspidal automorphic representations of higher rank, see for instance \cite{CHT, taylor,lambetal}. For instance, a brilliant application of such automorphy results led to the resolution of the Sato-Tate conjecture, see \cite{LGHT}.
\par In this paper, we shall consider deformations with fixed determinant equal to the cyclotomic character. Given the importance of deformation rings in proving modularity results, it is of natural interest to explicitly characterize Galois deformation rings. Such investigations were carried out by N.Boston and B.Mazur, see \cite{boston1, boston2, bostonmazur}. Presentations for Galois deformation rings with prescribed local conditions are discussed in the work of G. B\"ockle \cite{bockle}. The deformation ring is presented by generators and relations. When there are no relations, the ring is smooth. If this is the case, the deformation ring/functor is said to be \textit{unobstructed} and it is in fact isomorphic to a power series ring over $\Z_p$. On the other hand, there may be relations, in which case its structure is more complicated. Such relations arise from \textit{local and global obstructions} to lifting. The local obstructions are characterized by local cohomology classes, while on the other hand, the global obstructions are characterized by global cohomology classes subject to local constraints (see \cite[Theorem 5.2]{bockle} for further details).
\par The deformation rings studied in this paper are equipped with a local condition at $p$, which we shall refer to as \textit{geometric deformation rings}. There is one such ring $\mathcal{R}_{E,p}$ associated to each pair $(E,p)$, where $E$ is a rational elliptic curve and $p$ an odd prime at which $E$ has good reduction. We study the structure of these rings on average, more precisely, how often they are unobstructed. Since the determinant is fixed throughout, the geometric deformation ring is smooth if and only if it is isomorphic to $\Z_p$. In this case, there is a unique characteristic-zero lift. We study the following questions.
\begin{Question}\label{main question}
\begin{enumerate}
    \item For a fixed elliptic curve $E$, how often is the deformation ring $\mathcal{R}_{E,p}$ unobstructed?
    \item For a fixed odd prime $p$ and $E$ varying over all rational elliptic curves with good reduction at $p$, how often is the deformation ring $\mathcal{R}_{E,p}$ unobstructed?
\end{enumerate}
\end{Question}
\par The first question was raised by Mazur in \cite[section 11]{infinitefern}, and studied by the second named author in \cite{westonmain}. In \textit{loc. cit.} one considers a Hecke eigencuspform $f$ of weight $k\geq 2$ and squarefree level. At each prime $p$, denote by $\bar{\rho}_{f,p}$ the residual representation at $p$. Then it is shown that for $100\%$ of the primes $p$, the full deformation ring associated to $\bar{\rho}_{f,p}$ (with no constraints on the determinant or local conditions) is unobstructed (and isomorphic to $\Z_p\llbracket X_1, X_2, X_3\rrbracket$). Furthermore, when $k>2$, the deformation ring is unobstructed for all but finitely many primes. These results were extended by J.Hatley to the case when the level is not squarefree, see \cite{hatley}. When $f$ is the modular form associated to an elliptic curve, the weight $k=2$ and it is still expected (but not proven except in a few cases; see \cite{ridgdill}) that the full deformation ring is unobstructed for all by finitely many primes. These results have been generalized to Hilbert modular forms by A.Gamzon \cite{gamzon}, and subsequently to regular algebraic conjugate selfdual
cuspidal (RACSDC) automorphic representations by D.Guiraud \cite{Guiraud}. We show that for a fixed elliptic curve $E_{/\Q}$ without complex multiplication, the geometric deformation ring $\mathcal{R}_{E,p}$ is isomorphic to $\Z_p$ for all but an explicit finite set of primes $p$, see Theorem \ref{section 4 main thm} and Corollary \ref{section 4 last}.

\par It is part (2) of Question \ref{main question} that is of primary interest from the point of view of arithmetic statistics. This question has not been studied before in any related context, and the answers provided in the last section of the paper are based on results proven in the first four sections. The answer to this question is the main goal of the paper and results are proven specifically with this application to arithmetic statistics in mind. We restrict ourselves to elliptic curves with good ordinary reduction at $p$, squarefree conductor and for which the residual representation $\bar{\rho}_{E,p}$ on $E[p]$ is irreducible. Furthermore, we insist that all primes $\ell$ dividing the conductor of $E$ are not $\pm 1\mod{p}$, see Definition \ref{section 5 defn}. For this set of elliptic curves, it is shown that the deformation ring $\mathcal{R}_{E,p}$ is unobstructed provided the degree of the modular parametrization $X_0(N)\rightarrow E$ is coprime to $p$. This condition has been studied in detail by M.Watkins in \cite{watkins}. Cohen-Lenstra heuristics indicate that the probability that $p$ divides the modular degree of an elliptic curve is given by the product 
\[1-\prod_{i\geq 1} \left(1-\frac{1}{p^i}\right)=\frac{1}{p}+\frac{1}{p^2}-\frac{1}{p^3}+\dots.\]
The heuristics indicate that the proportion of elliptic curves subject to the above conditions for which $\mathcal{R}_{E,p}$ is not isomorphic to $\Z_p$ becomes smaller as $p$ increases. These heurists are supported by computation which provide further insight into the problem.

\par The approach relies on showing that the \emph{dual Selmer group} associated with the deformation problem is trivial on average. These groups characterize obstructions to lifting Galois representations subject to local conditions. The strategy employed involves showing that this dual Selmer group vanishes provided the $p$-primary Bloch-Kato Selmer group vanishes, see Proposition \ref{BK to res selmer}. This argument is a purely Galois theoretic argument, which we expect should extend to more general Galois representations. The result of F.~Diamond, M.~Flach and L.~Guo (see \cite{DFG}) gives a criterion for the vanishing of these Bloch-Kato Selmer groups. The strategy used does not explicitly rely on $\op{R}=\mathbb{T}$ results.

\par Including the introduction, the article consists of five sections. In section \ref{section 2}, we discuss the essential objects of study in the paper, namely the deformation rings associated to elliptic curves. In section \ref{section 3}, we discuss presentations for deformation rings and establish a criterion for unobstructedness, in the setting in which there is a local condition at $p$. In section \ref{section 4}, we study the first of the two aforementioned questions. In this section, it is shown that given a non-CM elliptic curve, the deformation rings considered are unobstructed for all but a finite explicit set of primes. In section \ref{section 5}, the second question is studied, when $p\geq 5$ is a fixed prime and $E$ varies over a certain collection of elliptic curves. In this section the question of unobstructedness of the deformation ring is related to the modular degree. Cohen-Lenstra heuristics are supported by computations in this section. 
\subsection*{Acknowledgment} From September 2022 to September 2023, the first author's research was supported by the CRM-Simons fellowship. We would like to thank the referee for the helpful report.

\section{Deformation theory of Galois representations}\label{section 2}
\par Throughout this section, we fix an elliptic curve $E$ defined over the rationals and a prime number $p\geq 3$ such that the following simplifying assumptions are in place
\begin{enumerate}
    \item $E$ has good reduction at $p$,
    \item the residual representation $\bar{\rho}_{E,p}:\op{Gal}(\bar{\Q}/\Q)\rightarrow \op{GL}_2(\F_p)$ on the $p$-torsion points $E[p]$ is absolutely irreducible.
\end{enumerate}Fix an algebraic closure $\bar{\Q}$ of $\Q$. For a finite set of primes $\Sigma$, let $\Q_{\Sigma}\subset \bar{\Q}$ be the maximal extension of $\Q$ which is unramified at all primes $v\notin \Sigma$ and $\op{G}_{\Sigma}$ denote the Galois group $\op{Gal}(\Q_{\Sigma}/\Q)$.
\subsection{Galois deformations} \par Associated to the residual representation $\bar{\rho}=\bar{\rho}_{E,p}$ are various Galois deformation rings, which parametrize the Galois representations which lift $\bar{\rho}$. We recall how these rings are defined and the properties that characterize them. Throughout, $S$ will denote the set of primes $\ell$ at which $E$ has bad reduction. Note that by assumption, $S$ does not contain $p$. The global representation gives rise to a local one at each prime $\ell$. For every prime $\ell$, choose an algebraic closure $\bar{\Q}_\ell$ of $\Q_\ell$ and an embedding $\iota_\ell:\bar{\Q}\hookrightarrow \bar{\Q}_\ell$. Set $\op{G}_\ell$ to denote the absolute Galois group $\op{Gal}(\bar{\Q}_\ell/\Q_\ell)$ and note that $\iota_\ell$ induces an inclusion $\op{G}_\ell\hookrightarrow \op{Gal}(\bar{\Q}/\Q)$. With respect to this inclusion, we let $\bar{\rho}_{|\ell}$ be the restriction of $\bar{\rho}$ to $\op{G}_\ell$. We introduce various functors of \textit{Galois deformations} associated to the residual representation $\bar{\rho}$, some of which shall involve conditions on the local representations. First, we introduce the functor of deformations with no local constraints. Let $\mathcal{C}$ be the category of commutative complete local noetherian $\Z_p$-algebras with residue field isomorphic to $\F_p$. Such rings are referred to as \textit{coefficient rings} and are inverse limits of finite length local rings. A coefficient ring $A$ is equipped with the inverse limit topology and admits a presentation of the form
\[A\simeq \frac{\Z_p\llbracket X_1, \dots, X_n\rrbracket}{(f_1, \dots, f_m)},\] where $f_1, \dots, f_m$ are in the maximal ideal of $A$. Here, $\Z_p\llbracket X_1, \dots, X_n\rrbracket$ is the formal power series ring over $\Z_p$ in indeterminates $X_1, \dots, X_n$. Important examples of such coefficient rings include $\Z_p$, $\Z/p^N$, the ring of dual numbers $\F_p[\epsilon]/(\epsilon^2)$ and the Iwasawa algebra $\Z_p\llbracket T\rrbracket$. A coefficient ring $A$ comes equipped with a unique residue map $A\rightarrow \F_p$ which is a $\Z_p$-algebra homomorphism. A map between coefficient rings is a $\Z_p$-algebra homomorphism which commutes with residue maps. Denote by $\hGL{A}$ the subgroup of $\op{GL}_2(A)$ consisting of matrices which reduce to the identity in $\op{GL}_2(\F_p)$. Two continuous $A$-valued representations
\[\rho_1, \rho_2: \op{G}_{ S\cup \{p\}}\rightarrow \op{GL}_2(A)\]lifting $\bar{\rho}$ are strictly equivalent if they are conjugate by some matrix in $\hGL{A}$.
\begin{Definition}A deformation $\rho:\op{G}_{ S\cup \{p\}}\rightarrow \op{GL}_2(A)$ of $\bar{\rho}$ is a strict equivalence class of continuous lifts of $\bar{\rho}$.
\end{Definition}
Let $\chi$ denote the cyclotomic character. Throughout, all deformations $\rho$ of $\bar{\rho}$ are stipulated to have determinant equal to $\chi$. When we refer to a Galois deformation $\rho:\op{G}_{S\cup \{p\}}\rightarrow \op{GL}_2(A)$, it is to be understood that $\rho$ is an actual representation whose strict equivalence class is the deformation in question. 
\subsection{Deformation rings}
\par The functor of deformations
\[\op{D}_{\bar{\rho}}: \mathcal{C}\rightarrow \op{Sets}\]sends a ring $A$ to the set of deformations of $\bar{\rho}$ of the form $\rho:\op{G}_{ S\cup \{p\}}\rightarrow \op{GL}_2(A)$. Note that $S\cup \{p\}$ is the set of primes at which the deformations are allowed to be ramified, and this is suppressed in the notation. In fact, throughout the paper, we will only be considering deformations with minimal ramification in this sense. Since it is assumed that $\bar{\rho}$ is irreducible, it follows from \cite[Proposition 1]{mazur1} that there is a \textit{universal} Galois deformation
\[\rho^{\op{univ}}:\op{G}_{ S\cup\{p\}}\rightarrow \op{GL}_2(\op{R}_{\bar{\rho}})\] representing $\op{D}_{\bar{\rho}}$. This means that if $A$ is a coefficient ring and $\rho:\op{G}_{ S\cup \{p\}}\rightarrow \op{GL}_2(A)$ is a Galois deformation of $\bar{\rho}$, then there is a unique map of coefficient rings $\op{R}_{\bar{\rho}}\rightarrow A$ such that the following diagram commutes
\[ \begin{tikzpicture}[node distance = 2.5 cm, auto]
            \node at (0,0) (G) {$\op{G}_{ S\cup \{p\}}$};
             \node (A) at (3,0){$\op{GL}_2(A)$.};
             \node (B) at (3,2){$\op{GL}_2(\op{R}_{\bar{\rho}})$};
      \draw[->] (G) to node [swap]{$\rho$} (A);
       \draw[->] (B) to node{} (A);
      \draw[->] (G) to node {$\rho^{\op{univ}}$} (B);
\end{tikzpicture}\]
Here, $\op{R}_{\bar{\rho}}$ is referred to as the \textit{universal deformation ring} associated to $\op{D}_{\bar{\rho}}$.
\par Let $\op{I}_p$ be the inertia group at the prime $p$. A character $\psi:\op{G}_{\Q}\rightarrow \op{GL}_1(\Z_p)$ is \textit{geometric} if $\op{\psi}_{\restriction \op{I}_p}=\chi^{k-1}_{\restriction \op{I}_p}$, for $k\in \Z_{\geq 2}$. Fontaine and Mazur introduce the notion of a \textit{geometric} Galois representation.
\begin{Definition}
    Let $\rho:\op{G}_{\Q}\rightarrow \op{GL}_2(\bar{\Q}_p)$ be a continuous Galois representation, $\rho$ is said to be geometric if it satisfies the following conditions:
    \begin{enumerate}
        \item $\rho$ is irreducible,
        \item $\op{det}\rho$ is an odd geometric character, 
        \item $\rho$ is unramified away from a finite set of primes,
        \item the local representation $\rho_{\restriction p}$ is \textit{deRham} (see \cite[section 6]{brinonconrad} for the definition and basic properties of deRham representations).
    \end{enumerate}
\end{Definition}
When $\rho$ arises from a Hecke eigencuspform, it is necessarily geometric. Fontaine and Mazur conjectured that $2$-dimensional geometric Galois representations arise from Hecke eigencuspforms. This conjecture has been settled in all but one exceptional case (see \cite{taylorfm} and \cite{kisinfm}). One is primarily interested in Galois representations that are geometric in the above sense. Throughout, we shall impose an additional deformation condition at $p$, which will guarantee that the deformation rings parametrize geometric lifts. We will further elaborate on this in the next subsection.
\par Note that if $\rho$ is a deformation of $\bar{\rho}$ then $\rho_{\restriction \ell}$ is a deformation of $\bar{\rho}_{\restriction \ell}$. Therefore, a global deformation gives rise to a local deformation at each prime $\ell$. At each prime $\ell$, set $\operatorname{Def}_\ell(A)$ to be the set of $A$-deformations of $\bar{\rho}_{\restriction \ell}$. The association $A\mapsto \operatorname{Def}_\ell(A)$ defines a functor $\operatorname{Def}_\ell:\mathcal{C}\rightarrow \operatorname{Sets}$. Let $A$ be a coefficient ring with maximal ideal $\mathfrak{m}$. Let $I$ be an ideal in $A$, the quotient map $A\rightarrow A/I$ is said to be a \textit{small extension} if $\mathfrak{m}$ is principal and $I\cdot \mathfrak{m}=0$. \begin{Definition}\label{localdefconditions}
A functor of deformations $\mathcal{C}_\ell:\mathcal{C}\rightarrow \op{Sets}$ is a subfunctor of $\op{Def}_\ell$. Say that $\mathcal{C}_\ell$ is \textit{deformation condition} if the conditions $\eqref{dc1}-\eqref{dc3}$ stated below are satisfied:
      \begin{enumerate}
          \item\label{dc1} $\mathcal{C}_\ell(\F_p)=\{\bar{\rho}_{\restriction \ell}\}.$
          \item\label{dc2} For $i=1,2$, let $R_i\in \mathcal{C}$ and $\rho_i\in\mathcal{C}_\ell(R_i)$. Let $I_1$ be an ideal in $R_1$ and $I_2$ an ideal in $R_2$ such that there is an isomorphism $\alpha:R_1/I_1\xrightarrow{\sim} R_2/I_2$ satisfying \[\alpha(\rho_1 \;\text{mod}\;{I_1})=\rho_2 \;\text{mod}\;{I_2}.\] Let $R_3$ be the fibred product \[R_3=\lbrace(r_1,r_2)\mid \alpha(r_1\;\text{mod}\; I_1)=r_2\; \text{mod} \;I_2\rbrace\] and $\rho_1\times_{\alpha} \rho_2$ the induced $R_3$-representation, then $\rho_1\times_{\alpha} \rho_2\in \mathcal{C}_\ell(R_3)$.
          \item\label{dc3} Let $R\in \mathcal{C}$ with maximal ideal $\mathfrak{m}_R$. If $\rho\in \op{Def}_\ell(R)$ and $\rho\in \mathcal{C}_\ell(R/\mathfrak{m}_R^n)$ for all $n>0$ it follows that $\rho\in \mathcal{C}_\ell(R)$. In other words, the functor $\mathcal{C}_\ell$ is continuous.
      \end{enumerate}
      A deformation functor $\mathcal{C}_\ell$ is said to be \textit{liftable} if for every small extension $A\rightarrow A/I$ the induced map $\mathcal{C}_\ell(A)\rightarrow \mathcal{C}_\ell(A/I)$ is surjective.
      \end{Definition}
   
 Condition $\eqref{dc2}$ is referred to as the Mayer-Vietoris property. By a well-known result of Grothendieck \cite[section 18]{Mazurintro}, conditions $\eqref{dc1}$ to $\eqref{dc3}$ guarantee that $\mathcal{C}_\ell$ is representable.
 
 \begin{Definition}\label{adgaloisaction}Set $\op{Ad}^0\bar{\rho}$ to denote the Galois module whose underlying vector space consists of $2\times 2$ matrices with entries in $\F_p$, and with trace zero. The Galois action is as follows: for $g\in \op{G}_{\Q}$ and $v\in \op{Ad}^0\bar{\rho}$, 
     set $g\cdot v:=\bar{\rho}(g) v \bar{\rho}(g)^{-1}$.
     \end{Definition}
Note that $\g^*:=\op{Hom}(\g, \mu_p)$ can be identified with $\g(1)$, i.e., $\g$ twisted by the action of the mod-$p$ cyclotomic character $\bar{\chi}$. Let $\ell$ be a prime number, the functor of local deformations $\op{Def}_\ell$ is equipped with a tangent space. As a set, this is defined to be $\operatorname{Def}_\ell\left(\F_p[\epsilon]/(\epsilon^2)\right)$. It has the structure of a vector space over $\F_p$ and is in bijection with $H^1(\op{G}_\ell,\g)$. The bijection identifies a cohomology class $f$ with the deformation \[(\operatorname{Id}+\epsilon f)\bar{\rho}_{\restriction \ell}: \op{G}_{\ell}\rightarrow \op{GL}_2(\F_p[\epsilon]/(\epsilon^2)).\]
The tangent space $\mathcal{N}_\ell$ of a deformation condition $\mathcal{C}_\ell$ consists of the cohomology classes $f\in H^1(\op{G}_\ell, \g)$, such that $(\op{Id}+\epsilon f) \bar{\rho}_{\restriction \ell}\in \mathcal{C}_\ell\left(\F_p[\epsilon]/(\epsilon^2)\right)$. For $\ell\in S$, the deformation functor $\op{Def}_\ell$ is \textit{unobstructed} if it is liftable in the sense of Definition $\ref{localdefconditions}$. The following is a criterion for local unobstructedness.
\begin{Lemma}
The functor $\op{Def}_\ell$ is unobstructed if $H^0(\op{G}_\ell, \g^*)=0$.
\end{Lemma}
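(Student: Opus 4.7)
The plan is to use Mazur's obstruction theory together with local Tate duality. It suffices to show that for any small extension $\pi: A \to A/I$ (so $I \cdot \mathfrak{m}_A = 0$ and $I$ is naturally an $\F_p$-vector space), and any deformation $\rho_0 \in \op{Def}_\ell(A/I)$, there exists a lift $\rho \in \op{Def}_\ell(A)$ with $\pi \circ \rho = \rho_0$.

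First I would choose a continuous set-theoretic lift $\tilde\rho: \op{G}_\ell \to \op{GL}_2(A)$ of $\rho_0$ whose determinant equals $\chi$. This is possible because the kernel $1 + M_2(I)$ of reduction is abelian (since $I \cdot \mathfrak{m}_A = 0$) and the determinant map $1 + M_2(I) \to 1+I$ is surjective with a section, so one can correct an arbitrary set-theoretic lift to have determinant $\chi$. The deviation of $\tilde\rho$ from being a homomorphism is captured by
\[
c(g,h) \;=\; \tilde\rho(g)\tilde\rho(h)\tilde\rho(gh)^{-1} - \op{Id},
\]
which, because $\det \tilde\rho = \chi$, takes values in the trace-zero part of $M_2(\F_p) \otimes_{\F_p} I$, i.e.\ in $\g \otimes_{\F_p} I$, with Galois action as in Definition \ref{adgaloisaction}. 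A direct computation shows $c$ is a continuous $2$-cocycle, and its class $[c] \in H^2(\op{G}_\ell, \g \otimes_{\F_p} I)$ vanishes if and only if $\tilde\rho$ can be modified by a $1$-cochain valued in $\op{Id} + \g \otimes_{\F_p} I$ so as to become a homomorphism; such a modification is exactly a deformation of $\bar\rho_{\restriction \ell}$ to $A$ lifting $\rho_0$. Moreover, this modification does not disturb the determinant, since trace-zero matrices in the kernel of reduction exponentiate into $\op{SL}_2$.

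The main step is to show the obstruction group vanishes. Since $I$ is an $\F_p$-vector space, $H^2(\op{G}_\ell, \g \otimes_{\F_p} I) \cong H^2(\op{G}_\ell, \g) \otimes_{\F_p} I$, so it is enough to prove $H^2(\op{G}_\ell, \g) = 0$. By local Tate duality, $H^2(\op{G}_\ell, \g)$ is canonically dual to $H^0(\op{G}_\ell, \g^*)$, where $\g^* = \op{Hom}(\g,\mu_p)$ as in the discussion preceding the lemma. The hypothesis $H^0(\op{G}_\ell, \g^*) = 0$ therefore forces $H^2(\op{G}_\ell, \g) = 0$, the obstruction class $[c]$ vanishes, and the desired lift $\rho$ exists.

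The only mildly subtle points are bookkeeping items rather than obstacles: verifying the cocycle condition and checking that one can arrange $\det \tilde\rho = \chi$ on the set-theoretic lift. The hard conceptual input, local Tate duality identifying $H^2(\op{G}_\ell,\g)$ with $H^0(\op{G}_\ell,\g^*)^{\vee}$, is an off-the-shelf tool and requires no adaptation to the present setting.
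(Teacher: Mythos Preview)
Your proposal is correct and follows essentially the same approach as the paper: both construct the standard obstruction $2$-cocycle from a set-theoretic lift with determinant $\chi$, observe it lies in $H^2(\op{G}_\ell,\g)$ (or $\g\otimes I$), and kill this group via local Tate duality with the hypothesis $H^0(\op{G}_\ell,\g^*)=0$. Your write-up is slightly more careful about the $\otimes_{\F_p} I$ coefficients and the existence of the determinant-$\chi$ set-theoretic lift, but the argument is the same.
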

\begin{proof}
By the local Euler characteristic formula, $H^0(\op{G}_\ell, \g^*)$ is dual to $H^2(\op{G}_\ell, \g)$.
Let $A\rightarrow A/I$ be a small extension and $t$ a generator of the principal ideal $\mathfrak{m}_A$. Identify $\g$ with the kernel of the mod-$I$ reduction map 
\[\op{SL}_2(A)\rightarrow \op{SL}_2(A/I)\] by identifying $\mtx{a}{b}{c}{-a}$ with 
\[\op{Id}+t\mtx{a}{b}{c}{-a}=\mtx{1+ta}{tb}{tc}{1-ta}.\]
We show that \[\varrho:\op{G}_\ell\rightarrow \op{GL}_2(A/I)\] lifts to \[\tilde{\varrho}:\op{G}_\ell\rightarrow \op{GL}_2(A).\] Let $\tau:\op{G}_\ell\rightarrow \op{GL}_2(A)$ be a continuous lift of $\varrho$ (not necessarily a homomorphism), for which $\det \tau=\chi$. Such a lift does always exist. Note that for $g_1,g_2\in \op{G}_{\Q}$, \[\mathcal{O}(\varrho)(g_1,g_2):=\tau(g_1g_2)\tau(g_2)^{-1}\tau(g_1)^{-1}\] is an element of \[\g=\op{ker}\left\{\op{SL}_2(A)\rightarrow \op{SL}_2(A/I)\right\}.\] This defines a cohomology class 
$\mathcal{O}(\varrho)\in H^2(\op{G}_\ell, \g)$, which is trivial precisely when $\varrho$ lifts to an actual representation \[\tilde{\varrho}:\op{G}_\ell\rightarrow \op{GL}_2(A).\] Since $H^2(\op{G}_\ell, \g)=0$, a lift $\tilde{\varrho}$ must exist and the result follows.
\end{proof}
\subsection{The flat deformation condition}
\par The deformation rings we consider shall parametrize geometric Galois representations $\rho$ lifting $\bar{\rho}$ for which $\op{det} \rho=\chi$. This is ensured by choosing an appropriate local deformation condition $\mathcal{C}_p$ for $\bar{\rho}_{|p}$. A deformation $\varrho:\op{G}_p\rightarrow \op{GL}_2(A)$ of $\bar{\rho}_{|p}$ is said to be \textit{flat} if for each finite length quotient $A/I$ of $A$, the mod-$I$ representation  $\varrho_I:=\varrho\mod{I}$ arises from the Galois action on the generic fibre of a a finite flat group scheme over $\Z_p$. Such flat deformations are characterized by certain \textit{filtered Dieudonn\'e modules}.
\begin{Definition}
    A filtered Dieudonn\'e module $M$ is a $\Z_p$-module furnished with a decreasing, exhaustive, separated filtration $(M^i)$ of sub-$\Z_p$-modules. For each integer $i$, we are given a $\Z_p$-linear map $\varphi^i:M^i\rightarrow M$ such that, for $x\in M^{i+1}$, $\varphi^{i+1}(x)=p\varphi^i(x)$. Denote by $\op{MF}_{\op{tor}}^f$ the category of filtered Dieudonn\'e-modules $M$ that are of finite length and for which $\sum_i \varphi^i(M)=M$. For $j>0$, set $\op{MF}_{\op{tor}}^{f,j}$ to be the full subcategory of $\op{MF}_{\op{tor}}^f$ consisting of modules $M$ for which $M^0=M$ and $M^j=0$.
\end{Definition}

Let $\op{Rep}_{\Z_p}^f$ be the category of finite length $\Z_p[\op{G}_p]$-modules. For $j<p$, Fontaine and Laffaille show in \cite{fontainelaffaille} that there is a faithful, exact contravariant functor 
\[\op{U}_S: \op{MF}_{\op{tor}}^{f,j}\rightarrow \op{Rep}_{\Z_p}^f.\]
Since we fix the determinant of our deformations to be equal to $\chi$, we need only focus our attention to the category $\op{MF}_{\op{tor}}^{f,2}$. Fontaine and Laffaille show that $\op{U}_S$ induces an anti-equivalence between this category and the category of $\op{G}_p$-modules arising from finite flat group schemes over $\Z_p$. Since the elliptic curve $E$ is assumed to have good reduction at $p$, the residual representation $\bar{\rho}_{|p}$ is flat, i.e., arises from a finite flat group scheme over $\Z_p$. The result of Fontaine and Laffaille implies that there exists a uniquely determined module $M_0\in \op{MF}_{\op{tor}}^{f,2}$, such that $\op{U}_S(M_0)=V_{\bar{\rho}}$, where $V_{\bar{\rho}}$ is the underlying $\F_p$-vector space on which $\op{G}_p$ acts via $\bar{\rho}_{|p}$. Let $R\in \mathcal{C}$, and $R/I$ be a finite length quotient of $R$. The mod-$I$ reduction of $\varrho:\op{G}_p\rightarrow \op{GL}_2(R)$ is denoted $\varrho_I$ and its underlying module is denoted $V_{\varrho_I}$. We take note of this result.
\begin{Th}[Fontaine-Laffaille]
Let $R\in \mathcal{C}$, a deformation $\varrho:\op{G}_p\rightarrow \op{GL}_2(R)$ is flat if for each finite length quotient $R/I$, there is a Dieudonn\'e module $M_{\varrho_I}\in \op{MF}_{\op{tor}}^{f,2}$, such that $\op{U}_S(M_{\varrho_I})=V_{\varrho_I}$.
\end{Th}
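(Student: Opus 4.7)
The plan is to apply the Fontaine--Laffaille anti-equivalence recalled just before the statement. By the definition of flatness given earlier, $\varrho$ is flat precisely when, for every finite length quotient $R/I$, the representation $\varrho_I$ arises from the generic fibre of a finite flat group scheme over $\Z_p$. So the task reduces to identifying this ``geometric'' condition with the existence of the claimed Dieudonn\'e modules.

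First I would verify that the relevant target subcategory is indeed $\op{MF}_{\op{tor}}^{f,2}$. Since all deformations are stipulated to have determinant equal to $\chi$, any flat lift of $\bar{\rho}_{|p}$ has Hodge--Tate weights $\{0,1\}$, matching those of the Tate module of an elliptic curve with good reduction at $p$. Consequently the Hodge filtration of the associated filtered Dieudonn\'e module is concentrated in degrees $0$ and $1$, i.e., $M^0=M$ and $M^2=0$, placing $M_{\varrho_I}$ in $\op{MF}_{\op{tor}}^{f,2}$. The hypothesis $j<p$ required for the Fontaine--Laffaille functor is satisfied here because $p\geq 3$ and $j=2$.

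Next I would invoke the anti-equivalence induced by $\op{U}_S$ between $\op{MF}_{\op{tor}}^{f,2}$ and the full subcategory of $\op{Rep}_{\Z_p}^f$ consisting of $\op{G}_p$-modules arising from finite flat group schemes over $\Z_p$. Under this anti-equivalence, the existence of some $M_{\varrho_I}\in \op{MF}_{\op{tor}}^{f,2}$ with $\op{U}_S(M_{\varrho_I})=V_{\varrho_I}$ is logically equivalent to $V_{\varrho_I}$ lying in the essential image, i.e., to $\varrho_I$ being flat in the sense of the earlier definition. Applying this criterion to every finite length quotient $R/I$ yields the theorem.

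The only remaining subtlety is a bookkeeping one: compatibility of the modules $M_{\varrho_I}$ across the inverse system $\{R/I\}$, together with the fact that $R$ itself need not be of finite length. This, however, is automatic from the functoriality of $\op{U}_S$: a surjection $R/I\twoheadrightarrow R/J$ with $I\subseteq J$ induces a natural morphism $V_{\varrho_I}\to V_{\varrho_J}$ that corresponds, under the contravariant $\op{U}_S$, to a canonical morphism $M_{\varrho_J}\to M_{\varrho_I}$ in $\op{MF}_{\op{tor}}^{f,2}$. Thus the only real content is the cited Fontaine--Laffaille anti-equivalence itself, and the theorem is essentially a repackaging of that equivalence in the two-dimensional, determinant-$\chi$ setting; I do not expect any substantive obstacle beyond this.
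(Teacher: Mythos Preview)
Your proposal is correct and follows essentially the same approach as the paper: the paper's proof is simply a citation to \cite[section 9]{fontainelaffaille}, and you have merely unpacked why the anti-equivalence stated just before the theorem (between $\op{MF}_{\op{tor}}^{f,2}$ and the category of $\op{G}_p$-modules arising from finite flat group schemes over $\Z_p$) immediately yields the statement. The only content is that cited anti-equivalence, as you correctly identify.
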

\begin{proof}
Please refer to \cite[section 9]{fontainelaffaille}, where the result is proved.
\end{proof}
\par Let $\op{F}_2$ be the subfunctor of $\op{Def}_p$ consisting of flat deformations. Ramakrishna in \cite{ramakrishna compositio} showed that under certain hypotheses, $\op{F}_2$ is representable by a power series ring over $\Z_p$. Let us recall these results. Set $\op{End}_p(\bar{\rho})$ to denote the ring of $\op{G}_p$-module endomorphisms of $V_{\bar{\rho}}$. 
\begin{Th}[Ramakrishna]
Assume that $\op{End}_p(\bar{\rho})=\F_p$, then, $\op{F}_2$ is pro-representable  by a smooth ring $R_2\simeq \Z_p\llbracket X\rrbracket$.
\end{Th}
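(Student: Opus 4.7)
The plan is to verify that $\op{F}_2$ is a deformation condition in the sense of Definition \ref{localdefconditions}, transport the problem to the Fontaine--Laffaille side, and then compute the tangent space and the obstruction. First, I would check the three axioms for $\op{F}_2$. Axiom \eqref{dc1} is immediate since $E$ has good reduction at $p$, so $\bar{\rho}_{|p}$ is already flat and arises from the distinguished object $M_0 \in \op{MF}_{\op{tor}}^{f,2}$. Axiom \eqref{dc3} holds because flatness is defined through all finite length quotients. For axiom \eqref{dc2}, I would use that fibre products in $\op{MF}_{\op{tor}}^{f,2}$ are computed componentwise on the underlying modules, filtrations, and Frobenius maps, and that the anti-equivalence $\op{U}_S$ carries such fibre products to fibre products of Galois modules. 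Representability by some $R_2 \in \mathcal{C}$ then follows from Grothendieck's criterion.

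Next, via the Fontaine--Laffaille anti-equivalence, flat deformations of $\bar{\rho}_{|p}$ with determinant $\chi$ correspond to deformations of $M_0$ in $\op{MF}_{\op{tor}}^{f,2}$ with fixed determinant, where $M_0$ is a $2$-dimensional $\F_p$-vector space with filtration $M_0 = M_0^0 \supsetneq M_0^1 \supsetneq M_0^2 = 0$ and $M_0^1$ one-dimensional. I would compute the tangent space $\mathcal{N}_p = \op{F}_2(\F_p[\epsilon]/(\epsilon^2))$ directly on the Fontaine--Laffaille side: a deformation over $\F_p[\epsilon]$ is a free $\F_p[\epsilon]$-module $\tilde{M}$ of rank $2$ equipped with a rank-one direct summand $\tilde{M}^1$ lifting $M_0^1$ and Frobenius maps $\tilde\varphi^0, \tilde\varphi^1$ satisfying the Fontaine--Laffaille compatibility and reducing to those of $M_0$. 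Counting such data modulo strict equivalence and imposing the fixed-determinant condition, and invoking $\op{End}_p(\bar{\rho}) = \F_p$ (equivalently, $\op{End}(M_0) = \F_p$ in the Fontaine--Laffaille category) to pin down the quotient, yields $\dim_{\F_p} \mathcal{N}_p = 1$.

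For smoothness, I would show that every flat deformation over a small extension $A \to A/I$ lifts to a flat deformation over $A$. Equivalently, the obstruction group $\op{Ext}^2$ in the category of trace-zero Fontaine--Laffaille modules vanishes; this is a structural feature of $\op{MF}_{\op{tor}}^{f,2}$ (which has cohomological dimension at most one), established in the original work of Fontaine and Laffaille. Concretely, one exhibits the lift at the level of the Fontaine--Laffaille module, where the obstruction lives in an $\op{Ext}^2$ that is automatically zero, then applies $\op{U}_S$ to obtain the flat Galois lift. Combining representability, the one-dimensional tangent space, and smoothness, the standard classification of pro-representable smooth functors on $\mathcal{C}$ forces $R_2 \simeq \Z_p \llbracket X \rrbracket$.

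The main obstacle is the vanishing of $\op{Ext}^2$ in the Fontaine--Laffaille category; everything else is essentially bookkeeping once the translation to filtered Dieudonné modules is set up. Fortunately, this vanishing is built into the Fontaine--Laffaille formalism and can be invoked rather than reproven.
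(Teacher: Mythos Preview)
Your proposal is correct and follows essentially the same underlying approach as the paper, namely Ramakrishna's Fontaine--Laffaille argument: verify the deformation-condition axioms, pass to $\op{MF}_{\op{tor}}^{f,2}$, compute a one-dimensional tangent space using $\op{End}_p(\bar{\rho})=\F_p$, and use the absence of obstructions in the Fontaine--Laffaille category to conclude smoothness. The only difference is presentational: the paper does not reprove any of this but simply cites Ramakrishna's result in the setting without fixed determinant (where the tangent space in $H^1(\op{G}_p,\op{Ad}\bar{\rho})$ is two-dimensional and the ring is $\Z_p\llbracket X_1,X_2\rrbracket$) and then remarks that the identical argument, run with $\det=\chi$ fixed, drops the tangent space to one dimension and yields $\Z_p\llbracket X\rrbracket$. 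Your sketch is a more self-contained unpacking of what that citation actually contains.
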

\begin{proof}
The result of Ramakrishna is proved in the setting where the determinants of the deformations $\varrho$ of $\bar{\rho}_{|p}$ are not fixed. Since $\varrho$ is flat, $\op{det}\varrho_{|\op{I}_p}=\chi$. Note however, that we fix the entire determinant $\op{det} \varrho=\chi$. Ramakrishna shows that the flat deformation functor without fixed determinant is representable by $\Z_p\llbracket X_1, X_2\rrbracket$. This is proven by showing that the full-adjoint tangent space corresponding to flat deformations contained in $H^1(\op{G}_p, \op{Ad}\bar{\rho})$ is $2$-dimensional, and hence, the ring is of the form $\Z_p\llbracket X_1, X_2\rrbracket/\mathcal{I}$. It is also shown that this functor is representable by a smooth ring over $\Z_p$, hence, the ideal $\mathcal{I}=0$. The same argument in the case when the determinant is fixed, shows that when $\det=\chi$ is fixed, the functor is represented by $\Z_p\llbracket X\rrbracket /\mathcal{J}$, and since the functor is liftable, it must be isomorphic to $\Z_p\llbracket X\rrbracket$.
\end{proof}
When the condition $\op{End}_p(\bar{\rho})=\F_p$ is satisfied, set $\mathcal{C}_p$ to denote the flat deformation functor with $\op{det}=\chi$. Let $\mathcal{N}_p\subseteq H^1(\op{G}_p, \g)$ denote its tangent space, note that by the above result, $\mathcal{N}_p$ is $1$-dimensional. Also note that in this setting when $\op{End}_p(\bar{\rho})=\F_p$, it follows that $H^0(\op{G}_p, \g)=0$. Therefore, the formula
\[\dim \mathcal{N}_p=1+\dim H^0(\op{G}_p, \g)\]is automatically satisfied.
When $E$ has supersingular reduction at $p$, the condition $\op{End}_p(\bar{\rho})=\F_p$ follows from \cite{serre}. On the other hand, when $E$ is ordinary at $p$, there is an unramified character $\psi$ such that $\bar{\rho}_{|p}=\mtx{\psi \bar{\chi}}{\ast}{0}{\psi^{-1}}$. In this setting, $\op{End}_p(\bar{\rho})=\F_p$ if and only if the representation $\bar{\rho}_{|p}$ is non-split. Therefore, consider the case when $\bar{\rho}_{|p}$ is split. In this case, Ramakrishna shows in \cite[Table 2, p. 128]{ravi2} that there is a pair $(\mathcal{C}_p, \mathcal{N}_p)$, where $\mathcal{C}_p$ is a liftable deformation functor with determinant equal to $\chi$ and $\mathcal{N}_p$ is the tangent space of $\mathcal{C}_p$. Moreover, these deformations are all ordinary, and are defined as follows.
\begin{itemize}
    \item When $\bar{\rho}_{|p}$ is twist equivalent to $\mtx{\bar{\chi}}{0}{0}{1}$, it is shown that there is subfunctor $\mathcal{C}_p$ of $\op{Def}_p$ consisting of flat deformations represented by $\Z_p\llbracket X_1, X_2\rrbracket$.
    \item When $\bar{\rho}_{|p}$ is \textit{not} twist equivalent to $\mtx{\bar{\chi}}{0}{0}{1}$, it is shown that there is subfunctor $\mathcal{C}_p$ of $\op{Def}_p$ consisting of ordinary deformations represented by $\Z_p\llbracket X_1, X_2\rrbracket$. In this setting, we check that all infinitesimal lifts $\mathcal{C}_p\left(\F_p[\epsilon]/(\epsilon^2)\right)$ are flat.
\end{itemize}
\begin{Proposition}\label{Np flat}
Let $E$ be an elliptic curve with good reduction at $p\geq 5$, and $\bar{\rho}$ the residual representation. Then, $\bar{\rho}_{|p}$ comes equipped with a liftable deformation condition $\mathcal{C}_p$ along with tangent space $\mathcal{N}_p$, such that
\begin{enumerate}
\item all deformations $\varrho$ satisfying $\mathcal{C}_p$ are ordinary (resp. crystalline) if $E$ has ordinary (resp. supersingular) reduction at $p$,
    \item $\dim \mathcal{N}_p=1+\dim H^0(\op{G}_p, \g)$,
    \item the deformations $\varrho\in \mathcal{C}_p\left(\F_p[\epsilon]/(\epsilon^2)\right)$ are flat, i.e., arise via $\op{U}_S$.
\end{enumerate}
\end{Proposition}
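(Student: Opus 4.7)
The plan is to split into cases according to the local structure of $\bar{\rho}_{|p}$, reusing the results already recalled in the excerpt. The first case is $\op{End}_p(\bar{\rho})=\F_p$, which covers both $E$ supersingular at $p$ (where by \cite{serre} the representation $\bar{\rho}_{|p}$ is absolutely irreducible for $p\geq 5$) and $E$ ordinary at $p$ with $\bar{\rho}_{|p}$ non-split. In this case I would take $\mathcal{C}_p$ to be the flat deformation functor with determinant $\chi$; Ramakrishna's theorem recalled above then gives representability by $\Z_p\llbracket X\rrbracket$, so $\mathcal{C}_p$ is liftable with $\dim\mathcal{N}_p=1$. Since $\op{End}_p(\bar{\rho})=\F_p$ forces $H^0(\op{G}_p,\g)=0$, the identity in (2) reduces to $1=1+0$, and (3) holds by construction. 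For (1), flat deformations are crystalline by Fontaine--Laffaille, while in the ordinary case the connected-\'etale exact sequence of the associated finite flat group schemes forces the lift to remain upper triangular with an unramified quotient, i.e., ordinary.

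The substantive case is $E$ ordinary at $p$ with $\bar{\rho}_{|p}$ split, so $\bar{\rho}_{|p}\simeq\psi\bar{\chi}\oplus\psi^{-1}$ for an unramified character $\psi$. I would first decompose $\g$ as a $\op{G}_p$-module into the three isotypic components $\psi^2\bar{\chi}$, $\F_p$, and $\psi^{-2}\bar{\chi}^{-1}$. Since $\bar{\chi}$ is ramified at $p$ and $\psi$ is unramified, only the trivial summand contributes invariants, giving $\dim H^0(\op{G}_p,\g)=1$. For the construction of $\mathcal{C}_p$ I would invoke \cite[Table 2]{ravi2}: when $\bar{\rho}_{|p}$ is twist equivalent to $\mtx{\bar{\chi}}{0}{0}{1}$ take $\mathcal{C}_p$ to be the flat subfunctor of $\op{Def}_p$ with determinant $\chi$, and otherwise take the ordinary subfunctor (lifts which remain upper triangular with unramified quotient) with determinant $\chi$. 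In both cases \emph{loc.\,cit.} provides a presentation $\Z_p\llbracket X_1,X_2\rrbracket$, verifying liftability together with $\dim\mathcal{N}_p=2=1+\dim H^0(\op{G}_p,\g)$, and (1) is built into the definition.

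The main obstacle is verifying (3) in the ordinary split case when $\bar{\rho}_{|p}$ is not twist-equivalent to $\mtx{\bar{\chi}}{0}{0}{1}$, since there $\mathcal{C}_p$ is defined by an ordinary rather than a flat condition and one must exhibit the flat structure by hand. My plan would be to construct, for each $f\in\mathcal{N}_p$, a filtered Dieudonn\'e module $M_f\in\op{MF}_{\op{tor}}^{f,2}$ over $\F_p[\epsilon]/(\epsilon^2)$ with $\op{U}_S(M_f)$ isomorphic to $(\op{Id}+\epsilon f)\bar{\rho}_{|p}$. The Hodge filtration $M_f\supset M_f^1\supset 0$ is forced by the ordinary structure, the rank-one piece $M_f^1$ being the lift of the $\bar{\chi}$-eigenline, while the Fontaine--Laffaille operators $\varphi^0,\varphi^1$ are pinned down by the requirement that the quotient $M_f/M_f^1$ realize the unramified character lifting $\psi^{-1}$. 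A direct linear-algebra check shows such a module exists and is functorial in $f$, placing $\mathcal{N}_p$ inside the image of $\op{U}_S$. An alternative route would be to compute the tangent space entirely inside $\op{MF}_{\op{tor}}^{f,2}$ via $\op{U}_S$ and match the two-dimensional space of admissible infinitesimal lifts of $M_0$ with $\mathcal{N}_p$.
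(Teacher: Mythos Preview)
Your proposal is correct and takes essentially the same approach as the paper: both reduce via the preceding discussion to the split ordinary case $\bar{\rho}_{|p}\simeq\psi\bar{\chi}\oplus\psi^{-1}$, invoke \cite[Table~2]{ravi2} for the two-dimensional liftable condition, and verify (3) by lifting the Fontaine--Laffaille module $M_0$ to $\F_p[\epsilon]/(\epsilon^2)$. The only cosmetic difference is that the paper works with an explicit basis $\{f_1,f_2\}$ of $\mathcal{N}_p$ (a ramified class with image in $\mtx{0}{\ast}{0}{0}$ and an unramified diagonal class) and realizes each via a lift of the dashed matrix for $M_0$, rather than phrasing the construction for a general $f\in\mathcal{N}_p$ as you do.
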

\begin{proof}
The above discussion shows that the only case that needs to be considered is when $\bar{\rho}_{|p}=\mtx{\psi \bar{\chi}}{0}{0}{\psi^{-1}}$ for an unramified character $\psi$. Let $M_0$ be the Fontaine-Laffaille module associated to $V_{\bar{\rho}}$. Then $M_0$ is a direct sum $M_0=N_1\oplus N_2$, where $N_1$ and $N_2$ are $1$-dimensional Fontaine-Laffaille modules corresponding to the Galois modules $\F_p(\psi \bar{\chi})$ and $\F_p(\psi^{-1})$ respectively. Choose a basis $\{e,k\}$ of $M_0$ such that $k$ spans $F^1 M_0$. The Fontaine-Laffaille module $M_0$ is characterized by a dashed matrix 
\[\left( {\begin{array}{c|c}
   a & c \\
   b & d \\
  \end{array} } \right)\in \text{M}_2(\F),\]
  where $\varphi^0(e)=ae+bk$ and $\varphi^1(k)=ce+dk$. The relation $\varphi^0_{\restriction M_0^1}=p\varphi^1=0$ implies that $\varphi^0(k)=0$. Therefore, the matrices corresponding to $\varphi^0$ and $\varphi^1$ are as follows \[\varphi^0=\mtx{a}{0}{b}{0}\text{, and }\varphi^1=\left( {\begin{array}{c}
   c \\
   d \\
  \end{array} } \right).\]
  
  On the other hand, according to \textit{loc. cit.}, $\mathcal{C}_p$ is the ordinary deformation condition and $\mathcal{N}_p$ is the ordinary tangent space and has dimension $2$. Explicitly, $\mathcal{N}_p\subset H^1(\op{G}_p, \g)$ has a basis of two cohomology classes $f_1$ and $f_2$, where $f_1$ is ramified and has image in $\mtx{0}{\ast}{0}{0}$, and on the other hand, $f_2$ is unramified and has image in the diagonal in $\g$. Both cohomology classes are seen to arise from Fontaine Laffaille modules $\widetilde{M}_0$ which fit into a short exact sequence 
\[0\rightarrow M_0\rightarrow \widetilde{M}_0\rightarrow M_0\rightarrow 0.\] The definition of $\widetilde{M}_0$ involves a choice of a dashed matrix with entries in $\F_p[\epsilon]/\epsilon^2$ lifting that corresponding $M_0$. We leave the details of this simple matrix calculation to the reader.
\end{proof}
\section{Presentations for Geometric Deformation Rings}\label{section 3}
Assume throughout that $p\geq 5$ and that \[\bar{\rho}:\op{G}_{S\cup \{p\}}\rightarrow \op{GL}_2(\F_p)\] is as in the previous section. In this section, we consider Galois representations which are ordinary/crystalline when localized at $p$, and thus in particular, are deRham. Such representations will be geometric and under a mild additional hypothesis, are known to arise from Hecke eigencuspforms. Following B\"ockle \cite{bockle}, we discuss presentations for the associated Galois deformation rings. We do not impose any local conditions at the primes $\ell\in S$. However, at $p$, we impose the condition $\mathcal{C}_p$ defined in the previous section. Recall that throughout, the determinant of all deformations considered is fixed and equal to $\chi$, the cyclotomic character.

\par We introduce an additional hypothesis which will play a role in simplifying some results in arithmetic statistics, however, the results in this section shall not assume this hypothesis.
\begin{hypothesis}We say that condition $(\star)$ is satisfied for $\bar{\rho}$ if for all $\ell\in S$,
\[H^0(\op{G}_\ell, \g^*)=0.\]
\end{hypothesis}

We introduce Selmer and dual Selmer groups which will play an important role in describing presentations for Galois deformation rings. First, we introduce the notion of Selmer data for the set of primes $S\cup \{p\}$. A Selmer datum $\mathcal{L}$ consists of a choice of subspace $\mathcal{L}_\ell\subseteq H^1(\op{G}_\ell, \g)$ for each prime $\ell\in S\cup \{p\}$; we set $\mathcal{L}_\infty=0$. Let $\mathcal{L}_\ell^{\perp}\subset H^1(\op{G}_p, \g^*)$ be the orthogonal complement of $\mathcal{L}_\ell$ with respect to the local Tate-duality pairing. Then, the Selmer and dual Selmer groups associated to the datum $\mathcal{L}$ are defined as follows
\begin{equation}\begin{split}
    & H^1_{\mathcal{L}}(\op{G}_{S\cup \{p\}}, \g)
    := \text{ker}\left\{ H^1(\op{G}_{S\cup \{p\}}, \g)\longrightarrow \bigoplus_{\ell\in S\cup \{p\}}\frac{H^1(\op{G}_\ell, \g)}{\mathcal{L}_\ell}\right\}\\
    & H^1_{\mathcal{L}^\perp}(\op{G}_{S\cup \{p\}}, \g^*)
    := \text{ker}\left\{ H^1(\operatorname{G}_{S\cup \{p\}}, \g^{*})\longrightarrow \bigoplus_{\ell\in S\cup \{p\}}\frac{H^1(\op{G}_\ell, \g^*)}{\mathcal{L}_\ell^{\perp}}\right\}\\
\end{split}\end{equation}
      respectively.

In this paper, we work in a special case, where $\mathcal{L}_\ell$ is the full space $H^1(\op{G}_\ell, \g)$ for $\ell\in S$ and $\mathcal{L}_p=\mathcal{N}_p$. The following notation we use is not standard and there are no conditions at the primes $\ell\in S$.
The Selmer and dual Selmer groups are as follows
\begin{equation}\begin{split}
    & H^1_{\angp}(\op{G}_{S\cup \{p\}}, \g)
    := \text{ker}\left\{ H^1(\op{G}_{S\cup \{p\}}, \g)\longrightarrow \frac{H^1(\op{G}_p, \g)}{\mathcal{N}_p}\right\}\\
    & H^1_{\angp}(\op{G}_{S\cup \{p\}}, \g^*)
    := \text{ker}\left\{ H^1(\operatorname{G}_{S\cup \{p\}}, \g^{*})\longrightarrow \frac{H^1(\op{G}_p, \g^*)}{\mathcal{N}_p^{\perp}}\right\}\\
\end{split}\end{equation}
      respectively.
      \begin{Proposition}\label{prop:selmer vs dselmer}
      With respect to notation introduced above, we have that
      \[\dim H^1_{\angp}(\op{G}_{S\cup \{p\}}, \g)-\dim H^1_{\angp}(\op{G}_{S\cup \{p\}}, \g^*)=\sum_{\ell\in S} H^0(\op{G}_\ell, \g^*).\] In particular, $(\star)$ is satisfied if and only if
      \[\dim H^1_{\angp}(\op{G}_{S\cup \{p\}}, \g)=\dim H^1_{\angp}(\op{G}_{S\cup \{p\}}, \g^*).\]
      \end{Proposition}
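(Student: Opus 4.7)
The plan is to derive both displays from the Greenberg–Wiles (global Euler characteristic) formula
\[
\frac{\#H^1_{\mathcal{L}}(\op{G}_{S\cup\{p\}},\g)}{\#H^1_{\mathcal{L}^\perp}(\op{G}_{S\cup\{p\}},\g^*)}
= \frac{\#H^0(\op{G}_{S\cup\{p\}},\g)}{\#H^0(\op{G}_{S\cup\{p\}},\g^*)}\,
\prod_{v\in S\cup\{p,\infty\}} \frac{\#\mathcal{L}_v}{\#H^0(\op{G}_v,\g)},
\]
applied to the Selmer datum with $\mathcal{L}_\ell=H^1(\op{G}_\ell,\g)$ for $\ell\in S$, $\mathcal{L}_p=\mathcal{N}_p$, and $\mathcal{L}_\infty=0$. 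Taking $\mathbb{F}_p$-dimensions turns the formula into the additive identity we want to verify, so the work reduces to computing each local contribution.

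First I would dispatch the global $H^0$'s: since $\bar{\rho}$ is absolutely irreducible and $p\geq 5$, both $\g=\op{Ad}^0\bar{\rho}$ and its twist $\g^*=\g(1)$ have no $\op{G}_{S\cup\{p\}}$-invariants, so the global factor is trivial. Next, for each $\ell\in S$ (so $\ell\neq p$), the local Euler characteristic vanishes and local Tate duality gives $\dim H^2(\op{G}_\ell,\g)=\dim H^0(\op{G}_\ell,\g^*)$, hence
\[
\dim\mathcal{L}_\ell-\dim H^0(\op{G}_\ell,\g)=\dim H^1(\op{G}_\ell,\g)-\dim H^0(\op{G}_\ell,\g)=\dim H^0(\op{G}_\ell,\g^*).
\]
This produces the sum on the right-hand side.

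The remaining two factors must cancel. At $v=p$, the proposition on $\mathcal{N}_p$ (from Section 2) gives $\dim\mathcal{N}_p-\dim H^0(\op{G}_p,\g)=1$. At $v=\infty$, $\mathcal{L}_\infty=0$, so the contribution is $-\dim H^0(\op{G}_\infty,\g)$; since $\det\bar{\rho}=\bar{\chi}$ is odd and $p$ is odd, complex conjugation acts on $\g$ with eigenvalues $1,-1,-1$, giving $\dim H^0(\op{G}_\infty,\g)=1$. The $+1$ at $p$ is therefore cancelled by the $-1$ at $\infty$, leaving precisely $\sum_{\ell\in S}\dim H^0(\op{G}_\ell,\g^*)$, which is the first claim. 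The second claim is then immediate: the hypothesis $(\star)$ is exactly the vanishing of every term in that nonnegative sum, hence equivalent to $\dim H^1_{\angp}(\op{G}_{S\cup\{p\}},\g)=\dim H^1_{\angp}(\op{G}_{S\cup\{p\}},\g^*)$.

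The only mildly subtle step is bookkeeping the archimedean and $p$-adic contributions (in particular tracking the $+1$ coming from $\dim\mathcal{N}_p=1+\dim H^0(\op{G}_p,\g)$ versus the $-1$ coming from the oddness of $\bar{\rho}$); everything else is a mechanical application of Greenberg–Wiles together with local Tate duality. I anticipate no real obstacle once the Selmer datum is set up correctly.
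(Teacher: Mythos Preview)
Your proposal is correct and follows essentially the same route as the paper: both apply Wiles' formula to the Selmer datum $\mathcal{L}_\infty=0$, $\mathcal{L}_p=\mathcal{N}_p$, $\mathcal{L}_\ell=H^1(\op{G}_\ell,\g)$ for $\ell\in S$, kill the global $H^0$ terms by irreducibility, use the local Euler characteristic plus Tate duality at $\ell\in S$ to produce $\sum_{\ell\in S}\dim H^0(\op{G}_\ell,\g^*)$, and cancel the $+1$ from $\dim\mathcal{N}_p=1+\dim H^0(\op{G}_p,\g)$ against the $-1$ from $\dim H^0(\op{G}_\infty,\g)=1$ coming from oddness. Your write-up is in fact slightly more explicit than the paper's about the archimedean eigenvalue computation.
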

      \begin{proof}
      Suppose for each prime $\ell\in S\cup\{p, \infty\}$, there is a choice of subspace $\mathcal{L}_\ell\subseteq H^1(\op{G}_\ell, \g)$. The Selmer condition $\mathcal{L}$ is the data $\{\mathcal{L}_\ell\}$ and the dimensions of the Selmer and dual Selmer groups are related as follows
      \[\begin{split}&\dim H^1_{\mathcal{L}}(\op{G}_{S\cup \{p\}}, \g)-\dim H^1_{\mathcal{L}^{\perp}}(\op{G}_{S\cup \{p\}}, \g^*)\\
      =& \dim H^0(\op{G}_{\Q}, \g)-\dim H^0(\op{G}_{\Q}, \g^*)\\
      +&\sum_{\ell\in S\cup\{p, \infty\}} \left(\dim \mathcal{L}_\ell-\dim H^0(\op{G}_\ell, \g)\right),\\
      \end{split}\]
      according to Wiles' formula \cite[Theorem 8.7.9]{NW}.
      The representation $\bar{\rho}$ associated to an elliptic curve is \textit{odd}, i.e., $\det \bar{\rho}(c)=-1$, where $c$ denotes complex conjugation. It is therefore easy to show that $\dim H^0(\op{G}_\infty, \g)=1$. Now, specify the Selmer data as follows 
      \[\mathcal{L}_\ell=\begin{cases}
      0\text{ if }\ell=\infty,\\
      \mathcal{N}_p\text{ if } \ell=p,\\
      H^1(\op{G}_\ell, \g)\text{ if }\ell\in S.\\
      \end{cases}\]
      
      In this case, the Selmer group $H^1_{\mathcal{L}}(\op{G}_{S\cup \{p\}}, \g)$ coincides with the Selmer group $H^1_{\angp}(\op{G}_{S\cup \{p\}}, \g)$. Since $\g$ is assumed to be irreducible, we have that \[\dim H^0(\op{G}_{\Q}, \g)=\dim H^0(\op{G}_{\Q}, \g^*)=0.\]
      Also, note that for $\ell \neq p$ by the local Euler characteristic formula, 
      \[\dim H^1(\op{G}_\ell, \g)-\dim H^0(\op{G}_\ell, \g)=\dim H^2(\op{G}_\ell, \g).\]
      On the other hand, according to local duality, 
      \[\dim H^2(\op{G}_\ell, \g)= \dim H^0(\op{G}_\ell, \g^*) .\] Finally, note that
      \[\dim \mathcal{N}_p=1+\dim H^0(\op{G}_p, \g).\] Putting it all together, we obtain the result.
      
      \end{proof}
     For the next part of the discussion, a good reference is \cite{bockle}. For $R\in \mathcal{C}$, let $\Dang(R)$ be the subset of $\op{D}_{\bar{\rho}}(R)$ consisting of deformations $\rho$ such that the local representation $\rho_{|p}$ satisfies $\mathcal{C}_p$. The functor $\Dang$ is represented by a universal deformation 
\[\rho^{\op{univ}, \angp}:\op{G}_S\rightarrow \op{GL}_2(\Rang),\] and we shall refer to $\Rang$ as the \text{universal geometric deformation ring}. This is abuse of terminology, since it does not capture all geometric deformations, since there may indeed be characteristic zero deformations $\varrho$ of $\bar{\rho}_{|p}$ that are deRham yet do not satisfy $\mathcal{C}_p$. However, since our constructions require choosing a suitable local condition that is smooth and satisfies other suitable properties, we work in this setting. 

\par Let $\ell\in S$, and $\rho_\ell:\op{G}_\ell\rightarrow \op{GL}_2(R_\ell)$ be the universal deformation of $\bar{\rho}_{|\ell}$ representing the functor $\op{Def}_\ell$. This local deformation ring has a presentation 
\[R_\ell\simeq \frac{\Z_p\llbracket X_1, \dots, X_u\rrbracket}{\left(g_1, \dots, g_v\right)}\]
where \[u=\dim H^1(\op{G}_\ell, \g)\textit{ and }v=\dim H^2(\op{G}_\ell, \g).\] Denote the ideal of relations by $\mathcal{I}_\ell:=\left(g_1, \dots, g_v\right)$. Let $\rho_p:\op{G}_p\rightarrow \op{GL}_2(R_p)$ be the universal deformation of $\bar{\rho}_{|p}$ representing the functor $\mathcal{C}_p$. Note that since the deformation functor $\mathcal{C}_p^{\psi}$ is liftable, the local ring $R_p$ is a smooth power series ring 
\[R_p\simeq \Z_p\llbracket X_1, \dots X_u\rrbracket\]
with $u=\dim \mathcal{N}_p$ generators.
\par By universality, the local representation $\rho_{|\ell}^{\op{univ}, \angp}$ arises by composing \[\rho_\ell:\op{G}_\ell\rightarrow \op{GL}_2(R_\ell)\] by the map $\op{GL}_2(R_\ell)\rightarrow \op{GL}_2(\Rang)$ induced by a uniquely determined map of local rings $R_\ell\rightarrow \Rang$. Let $\mathcal{J}_\ell$ denote the ideal generated by the image of $\mathcal{I}_\ell$ under the image of this map. The following result is a consequence of \cite[Theorem 5.2]{bockle}.
\begin{Th}\label{theorem presentations}
There is a presentation 
\[\Rang\simeq \frac{\Z_p\llbracket X_1, \dots, X_t\rrbracket}{\mathcal{J}},\] with 
\[t:=\dim H^1_{\angp}(\op{G}_{S\cup \{p\}}, \g)\] and where $\mathcal{J}$ is generated by the ideals $\mathcal{J}_\ell$, for $\ell\in S$, and $s:=\dim H^1_{\angp}(\op{G}_{S\cup \{p\}}, \g^*)$ globally defined relations $f_1,\dots, f_s\in (p, X_1, \dots, X_t)^2$. Each ideal $\mathcal{J}_\ell$ is generated by at most $ \dim H^0(\op{G}_\ell, \g^*)$ local relations.
\end{Th}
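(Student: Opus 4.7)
The plan is to apply Böckle's general presentation result \cite[Theorem 5.2]{bockle} to the deformation functor $\Dang$, which imposes the liftable local condition $\mathcal{C}_p$ at $p$ (Proposition \ref{Np flat}) and no condition beyond minimal ramification at the primes $\ell \in S$. Böckle's theorem produces a minimal presentation of such a deformation ring in Selmer-theoretic terms, and the task is essentially to translate its output into the notation of the statement.

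First I would count the generators. In any minimal presentation $\Z_p\llbracket X_1, \ldots, X_t\rrbracket \twoheadrightarrow \Rang$, the integer $t$ equals $\dim_{\F_p} \mathfrak{m}/(\mathfrak{m}^2, p)$, which by duality is canonically identified with the tangent space $\Dang(\F_p[\epsilon]/(\epsilon^2))$. Using the local characterization of $\mathcal{N}_p$ as the tangent space of $\mathcal{C}_p$ and the fact that no local conditions are imposed at $\ell \in S$, this tangent space is precisely $H^1_{\angp}(\op{G}_{S\cup\{p\}}, \g)$, giving $t$ generators. By minimality, the kernel $\mathcal{J}$ of such a presentation automatically lies in $(p, X_1, \ldots, X_t)^2$, which accounts for the final clause of the statement.

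Next I would separate the relations into local and global contributions, following the recipe in the proof of \cite[Theorem 5.2]{bockle}. For each $\ell \in S$, the local universal ring $R_\ell$ admits a minimal presentation with $\dim H^1(\op{G}_\ell, \g)$ generators and $\dim H^2(\op{G}_\ell, \g)$ relations $g_1, \ldots, g_v$. By local Tate duality, $\dim H^2(\op{G}_\ell, \g) = \dim H^0(\op{G}_\ell, \g^*)$. The universal arrow $R_\ell \to \Rang$ pushes $\mathcal{I}_\ell$ forward to $\mathcal{J}_\ell$, producing the claimed bound of $\dim H^0(\op{G}_\ell, \g^*)$ local relations at each $\ell \in S$. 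At $p$, liftability of $\mathcal{C}_p$ implies $R_p$ is smooth, so this prime contributes no relations. The remaining "genuinely global" relations are identified by Böckle's construction with a basis of $H^1_{\angp}(\op{G}_{S\cup\{p\}}, \g^*)$, yielding exactly $s$ further relations $f_1, \ldots, f_s$, all lying in $(p, X_1, \ldots, X_t)^2$ by minimality.

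The main obstacle is the bookkeeping to match Böckle's general formalism — phrased in terms of Selmer conditions at every prime in $S \cup \{p, \infty\}$ — against the asymmetric Selmer setup used here, where no condition is imposed at $\ell \in S$ (so $\mathcal{L}_\ell = H^1(\op{G}_\ell, \g)$ and $\mathcal{L}_\ell^\perp = 0$). One must carefully verify that the dual Selmer group in Böckle's formula, combined with the cokernel of the map from local relations to total relations, reproduces precisely the split "$s$ global plus $\sum_{\ell \in S}\dim H^0(\op{G}_\ell,\g^*)$ local" decomposition asserted. The consistency check is provided by Wiles' formula (already invoked in Proposition \ref{prop:selmer vs dselmer}), whose output matches the total relation count coming from Böckle's theorem.
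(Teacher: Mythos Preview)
Your proposal is correct and follows exactly the paper's approach: the paper simply states that the result is a consequence of \cite[Theorem 5.2]{bockle}, and your writeup is a faithful (and more detailed) unpacking of how that citation yields the presentation, with the same identification of generators via the tangent space $H^1_{\angp}$, the same local/global split of relations, and the same use of Tate duality for the bound $\dim H^0(\op{G}_\ell,\g^*)$ on the local relations.
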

\begin{Corollary}\label{corollary def zp}
Suppose that $(\star)$ is satisfied and 
\[H^1_{\angp}(\op{G}_{S\cup \{p\}}, \g^*)=0.\] Then, $\Rang$ is isomorphic $\Z_p$.
\end{Corollary}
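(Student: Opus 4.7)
The plan is to read off the conclusion directly from Theorem \ref{theorem presentations} after using the two hypotheses to kill both the number of generators and the number of relations in the presentation of $\Rang$.

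First I would invoke Theorem \ref{theorem presentations} to write
\[
\Rang \simeq \frac{\Z_p\llbracket X_1,\dots,X_t\rrbracket}{\mathcal{J}},
\]
where $t = \dim H^1_{\angp}(\op{G}_{S\cup\{p\}},\g)$ and $\mathcal{J}$ is generated by the local ideals $\mathcal{J}_\ell$ for $\ell\in S$ together with $s = \dim H^1_{\angp}(\op{G}_{S\cup\{p\}},\g^*)$ global relations lying in $(p,X_1,\dots,X_t)^2$. The hypothesis $H^1_{\angp}(\op{G}_{S\cup\{p\}},\g^*)=0$ immediately gives $s=0$, so there are no global relations.

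Next I would use Proposition \ref{prop:selmer vs dselmer}. Since $(\star)$ holds, it asserts that $\dim H^1_{\angp}(\op{G}_{S\cup\{p\}},\g) = \dim H^1_{\angp}(\op{G}_{S\cup\{p\}},\g^*)$, and the latter is zero by hypothesis; therefore $t=0$, and the presentation collapses to $\Rang \simeq \Z_p/\mathcal{J}$, where now $\mathcal{J}$ is generated purely by the local contributions $\mathcal{J}_\ell$. Finally, condition $(\star)$ says $H^0(\op{G}_\ell,\g^*)=0$ for every $\ell\in S$, and Theorem \ref{theorem presentations} bounds the number of generators of each $\mathcal{J}_\ell$ by $\dim H^0(\op{G}_\ell,\g^*)$; hence each $\mathcal{J}_\ell$ is the zero ideal, forcing $\mathcal{J}=0$ and thus $\Rang \simeq \Z_p$.

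There is essentially no obstacle here: the corollary is a bookkeeping consequence of the presentation theorem together with Proposition \ref{prop:selmer vs dselmer}. The only point requiring any care is making sure that $(\star)$ is invoked twice, once to equate the Selmer and dual Selmer dimensions (so that $t=0$) and once to force each local ideal $\mathcal{J}_\ell$ to be trivial (so that $\mathcal{J}=0$); both uses are already packaged into the statements cited above.
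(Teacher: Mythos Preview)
Your proof is correct and follows essentially the same approach as the paper: both use Proposition \ref{prop:selmer vs dselmer} together with the hypotheses to force $s=t=0$, then use $(\star)$ to kill the local ideals $\mathcal{J}_\ell$, and conclude via Theorem \ref{theorem presentations}. The only cosmetic difference is that the paper phrases the vanishing of $\mathcal{J}_\ell$ as $H^2(\op{G}_\ell,\g)=0$ (local duality applied to $(\star)$) rather than citing the generator bound from Theorem \ref{theorem presentations}, but this is the same argument.
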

\begin{proof}
It follows from Proposition \ref{prop:selmer vs dselmer} that both the Selmer group $H^1_{\angp}(\op{G}_{S\cup \{p\}}, \g)$ and the dual Selmer group $H^1_{\angp}(\op{G}_{S\cup \{p\}}, \g^*)$ are zero, hence, $s=t=0$. Furthermore, since $H^2(\op{G}_\ell, \g)=0$ for $\ell\in S$, it follows that $\mathcal{J}_\ell=0$ are all $\ell\in S$. Since $s=0$, it follows that there no globally defined relations, and hence, $\mathcal{J}=0$. Therefore, it follows from Theorem \ref{theorem presentations} that $\Rang$ is isomorphic $\Z_p$.
\end{proof}
\section{Statistics for a fixed Elliptic Curve $E_{/\Q}$ and varying prime $p$}\label{section 4}
\par Let $E$ be an elliptic curve defined over $\Q$ with squarefree conductor $N$ and $p\geq 5$ a prime. Then, the Galois action on the $p$-torsion points gives rise to the $2$-dimensional mod-$p$ Galois representation $\bar{\rho}=\bar{\rho}_{E,p}:\op{G}_{\Q, S\cup \{p\}}\rightarrow \op{GL}_2(\F_p)$. Associated to each pair $(E,p)$ such that:
\begin{enumerate}
    \item $E$ has good reduction at $p$,
    \item the Galois representation $\bar{\rho}$ is absolutely irreducible,
\end{enumerate}
let $\mathcal{R}_{E,p}=\Rang$ be the geometric deformation ring introduced in the previous section. Note that for a fixed elliptic curve $E$ without complex multiplication, all but finitely many primes $p$ satisfy the above conditions. In this section, we show that $\mathcal{R}_{E,p}\simeq \Z_p$ for all but finitely many primes $p$ (that satisfy the conditions above). This result may be contrasted with the main result of \cite{westonmain}, where it is shown that $\op{R}_{\bar{\rho}}$ is unobstructed for all but a density zero set of primes.
\par There are 2 steps to the arguments in this section. 
\begin{enumerate}
    \item It is shown that condition $(\star)$ is satisfied for all but finitely many primes $p$.
    \item It is shown that for all but finitely many $p$, $ H^1_{\angp}(\op{G}_{S\cup  \{p\}}, \g)=0$ for the residual representation $\bar{\rho}=\bar{\rho}_{E,p}$.
\end{enumerate}
\subsection{Bloch-Kato Selmer groups}
\par In order to prove our results about the vanishing of the Selmer group $H^1_{\angp}(\op{G}_{S\cup \{p\}}, \g^*)$, we relate it to the Bloch-Kato Selmer group, which is associated with the characteristic zero representation. Let us recall some standard definitions. In the next discussion, assume that the pair $(E, p)$ is fixed, and recall that $S$ is the set of primes $\ell\neq p$ at which $E$ has bad reduction. Let $\op{T}_p(E)$ denote the $p$-adic Tate-module associated to $E$ and $\op{V}_p(E)$ be the $p$-adic vector space $\op{T}_p(E)\otimes_{\Z_p} \Q_p$, equipped with the action of $\op{G}_{S\cup \{p\}}$, and let $\op{Ad}_p^0(E)$ be the adjoint representation associated to $\op{V}_p(E)$. For a $\op{G}_{S\cup \{p\}}$-module $M$, set $H^i(\Q_{S\cup\{p\}}/\Q, M)$ to denote the group of continuous classes $H^i_{\op{cnts}}(\op{G}_{S\cup\{p\}}, M)$. For each place $\ell$ of $\Q$, Bloch and Kato define a subspace \[H^1_{\bf{f}}\left(\Q_\ell, \op{Ad}_p^0(E)\right)\subseteq H^1\left(\Q_\ell, \op{Ad}_p^0(E)\right)\] as follows
\[H^1_{\bf{f}}\left(\Q_\ell, \op{Ad}_p^0(E)\right):=\begin{cases} H^1_{nr}\left(\Q_\ell, \op{Ad}_p^0(E)\right) & \text{ if }\ell\neq p, \infty,\\
\op{ker}\left\{H^1(\Q_\ell, \op{Ad}_p^0(E))\longrightarrow H^1(\Q_\ell, \op{B}_{\op{crys}}\otimes \op{Ad}_p^0(E))\right\} & \text{ if }\ell=p,\\
0 & \text{ if }\ell=\infty.
\end{cases}
\]
Let $\op{W}_p(E)$ be the quotient $\op{Ad}_p^0(E)\otimes_{\Q_p} \Q_p/\Z_p$; the natural quotient map $\op{Ad}_p^0(E)\rightarrow \op{W}_p(E)$ induces a map on passing to cohomology
\[H^1\left(\Q_\ell, \op{Ad}_p^0(E)\right)\rightarrow H^1\left(\Q_\ell, \op{W}_p(E)\right).\]
Let $\ell$ be any prime, set
\[H^1_{\bf{f}}\left(\Q_\ell, \op{W}_p(E)\right):=\op{im}\left\{H^1_{\bf{f}}\left(\Q_\ell, \op{Ad}_p^0(E)\right)\longrightarrow H^1(\Q_\ell, \op{W}_p(E))\right\}.\] For $M$ denoting either $\op{Ad}_p^0(E)$ or $\op{W}_p(E)$, the Bloch-Kato Selmer group associated to $M$ is defined as follows
\[H^1_{\bf{f}}(\Q_{S\cup \{p\}}/\Q, M):=\ker\left\{H^1(\Q_{S\cup \{p\}}/\Q, M)\longrightarrow \bigoplus_\ell \frac{H^1(\Q_\ell, M)}{H^1_{\bf{f}}(\Q_\ell, M)}\right\}.\]
For $\ell\notin \{p, \infty\}$, the local cohomology group $H^1_{\bf{f}}\left(\Q, \op{W}_p(E)\right)$ coincides with the maximal divisible subgroup of $H^1_{\op{nr}}\left(\Q_\ell, \op{W}_p(E)\right)$. For any finite set of primes $\Sigma$ not containing $p,\infty$, define a larger Selmer group as the kernel of the following map
\[H^1_\Sigma(\Q, M)\longrightarrow \left(\bigoplus_{\ell\notin \Sigma\cup \{p,\infty\}}\frac{H^1(\Q_\ell, M)}{H^1_{\op{nr}}(\Q_\ell, M)}\right)\oplus \left(\frac{H^1(\Q_p, M)}{H^1_{\bf{f}}(\Q_p, M)}\right).\]
Note that the inclusion $H^1_{\bf{f}}\left(\Q, \op{W}_p(E)\right)\subseteq H^1_{\emptyset}\left(\Q, \op{W}_p(E)\right)$ is an equality if $H^0\left(\op{I}_\ell,\op{W}_p(E)\right)$ is divisible for all $\ell\neq p$.
\subsection{Vanishing of the dual Selmer group}
We now study the relationship between the Bloch-Kato Selmer group and the smoothness of the geometric deformation ring. In order to arrive at such a relationship, we establish a criterion for the vanishing of the dual Selmer group $H^1_{\angp}(\op{G}_{S\cup \{p\}}, \g^*)$.
\begin{Lemma}\label{lemma 4.1}
Let $\mathcal{N}_p\subseteq H^1(\op{G}_p, \g)$ be the tangent space of $\mathcal{C}_p$. Then, the image of $\mathcal{N}_p$ under the natural map 
\[H^1(\op{G}_p, \g)\rightarrow H^1\left(\op{G}_p, \op{W}_p(E)\right)[p]\] is contained in $H^1_{\bf{f}}\left(\op{G}_p, \op{W}_p(E)\right)[p]$.
\end{Lemma}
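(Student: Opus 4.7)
The plan is to show that every class in $\mathcal{N}_p$ lifts, via the Fontaine--Laffaille dictionary, to a crystalline class on the adjoint of the $p$-adic Tate module. Since $H^1_{\bf{f}}(\Q_p, \op{W}_p(E))$ is by definition the image of $H^1_{\bf{f}}(\Q_p, \op{Ad}_p^0(E))$ under the natural quotient, such a lifting will immediately place the image of $\mathcal{N}_p$ inside $H^1_{\bf{f}}(\op{G}_p, \op{W}_p(E))[p]$.

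First I would make the target map explicit: identifying $\g$ with $\op{W}_p(E)[p]$, the map $H^1(\op{G}_p, \g) \to H^1(\op{G}_p, \op{W}_p(E))[p]$ in the statement is the connecting homomorphism of
\[0 \to \g \to \op{W}_p(E) \xrightarrow{p} \op{W}_p(E) \to 0.\]
Given $c \in \mathcal{N}_p$, Proposition \ref{Np flat}(3) tells us that the infinitesimal deformation $(\op{Id}+\epsilon c)\bar{\rho}_{|p}$ is flat, so it arises via $\op{U}_S$ from an extension
\[0 \to M_0 \to \widetilde{M}_c \to M_0 \to 0\]
in $\op{MF}_{\op{tor}}^{f,2}$, where $M_0$ is the Fontaine--Laffaille module attached to $V_{\bar{\rho}}$.

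The central step is to lift this mod-$p$ extension to an extension $0 \to \widetilde{M} \to \widetilde{N}_c \to \widetilde{M} \to 0$ of Fontaine--Laffaille modules over $\Z_p$, where $\widetilde{M}$ is the free $\Z_p$-Fontaine--Laffaille module attached to $\op{T}_p(E)$, so that $M_0 = \widetilde{M}/p\widetilde{M}$. Concretely, one lifts the dashed-matrix data of $\widetilde{M}_c$ from $\F_p$ to $\Z_p$, which is unobstructed aside from the single determinantal constraint imposed by $\det = \chi$. Applying $\op{U}_S$ and extracting the trace-zero adjoint of $\widetilde{N}_c$ yields a crystalline class $\tilde{c} \in H^1_{\bf{f}}(\op{G}_p, \op{Ad}^0 \op{T}_p(E))$, whose image in $H^1_{\bf{f}}(\op{G}_p, \op{Ad}_p^0(E))$ is then available.

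Finally, a short diagram chase using the commuting square built from the $p$-multiplication sequence on $\op{W}_p(E)$ together with the sequence
\[0 \to \op{Ad}^0 \op{T}_p(E) \to \op{Ad}_p^0(E) \to \op{W}_p(E) \to 0\]
identifies the image of $\tilde{c}$ in $H^1_{\bf{f}}(\op{G}_p, \op{W}_p(E))[p]$ with the image of $c$ under the map in the statement, completing the proof. The principal obstacle is the Fontaine--Laffaille lifting step: although conceptually transparent, it must be checked case-by-case across the supersingular, non-split ordinary, and two split ordinary regimes distinguished in Proposition \ref{Np flat}, which amounts to a brief linear-algebra verification that the explicit dashed-matrix description admits a compatible $\Z_p$-lift respecting the filtration and the operators $\varphi^i$.
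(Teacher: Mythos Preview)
Your approach is correct in outline but takes a substantially longer route than the paper's. The paper's proof is a two-line citation: by \cite[section 1.1.2 and p.~697]{DFG}, the subgroup $H^1_{\bf{f}}\left(\op{G}_p, \op{W}_p(E)\right)$ is \emph{already} characterized at the torsion level as the set of classes whose corresponding extensions lie in the essential image of the Fontaine--Laffaille functor $\op{U}_S$. Combined with Proposition~\ref{Np flat}(3), which says exactly that the extensions attached to classes in $\mathcal{N}_p$ arise via $\op{U}_S$, the lemma follows immediately. No lift to characteristic zero is needed.

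Your strategy---lift the mod-$p$ Fontaine--Laffaille extension to a $\Z_p$-extension, produce a crystalline class in $H^1_{\bf{f}}(\op{G}_p, \op{Ad}_p^0(E))$, then push forward to $\op{W}_p(E)$---effectively re-derives by hand the part of the DFG comparison that the paper simply cites. The ``principal obstacle'' you identify (surjectivity of $\op{Ext}^1_{\op{MF}}(\widetilde{M},\widetilde{M}) \to \op{Ext}^1_{\op{MF}}(M_0,M_0)$) is real, and while your case-by-case dashed-matrix lifting would likely go through in this low-weight setting, it is genuinely more work than you suggest and is precisely what the DFG reference packages for you. What your approach buys is a self-contained argument not relying on the somewhat technical DFG machinery; what the paper's approach buys is brevity and avoidance of the lifting verification altogether.
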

\begin{proof}
It is well known that elements of $ H^1_{\bf{f}}\left(\op{G}_p, \op{W}_p(E)\right)$ are precisely those classes whose corresponding extensions lie in the image of the Fontaine-Laffaille functor, see the discussion in \cite[section 1.1.2 and p.697]{DFG}. On the other hand, it follows from part (3) of Proposition \ref{Np flat} that all cohomology classes $f\in \mathcal{N}_p$ have this property. Hence, any class $f\in \mathcal{N}_p$ must map to $H^1_{\bf{f}}\left(\op{G}_p, \op{W}_p(E)\right)$.
\end{proof}
\begin{Proposition}\label{BK to res selmer}
Let $E$ be an elliptic curve defined over $\Q$ and $p\geq 5$ a prime at which $E$ has good reduction. Assume that the following conditions hold:
\begin{enumerate}
    \item the mod-$p$ Galois representation $\bar{\rho}_{E,p}$ is absolutely irreducible,
    \item $H^1_\emptyset(\Q, \op{W}_p(E))=0$.
\end{enumerate}Then the following assertions hold:
\begin{enumerate}
    \item $\dim H^1_{\angp}(\op{G}_{ S\cup \{p\}}, \g)=\sum_{\ell\in S} \dim H^0(\op{G}_\ell, \g^*)$,
    \item $H^1_{\angp}(\op{G}_{S\cup \{p\}}, \g^*)=0$.
\end{enumerate}
\end{Proposition}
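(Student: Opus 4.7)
The plan is to obtain an upper bound on $\dim H^1_{\angp}(\op{G}_{S\cup\{p\}}, \g)$ from the hypothesis $H^1_\emptyset(\Q, \op{W}_p(E))=0$ and then combine this with Proposition~\ref{prop:selmer vs dselmer} to pin down both quantities simultaneously. The bridge between characteristic $0$ and characteristic $p$ is the multiplication-by-$p$ sequence $0\to\g\to \op{W}_p(E)\xrightarrow{p} \op{W}_p(E)\to 0$: absolute irreducibility of $\bar{\rho}$ forces $\op{W}_p(E)^{\op{G}_\Q}=0$ (via the vanishing of $(\op{Ad}_p^0(E)\otimes\Q_p)^{\op{G}_\Q}$), so the connecting map furnishes a global isomorphism $H^1(\Q,\g)\xrightarrow{\sim} H^1(\Q, \op{W}_p(E))[p]$.

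The first step is to verify that under this isomorphism, $H^1_{\angp}(\op{G}_{S\cup\{p\}}, \g)$ lands inside the ``relaxed-at-$S$'' BK Selmer group $H^1_S(\Q, \op{W}_p(E))[p]$. At primes $\ell\notin S\cup\{p\}$, the induced map carries $H^1_{\op{nr}}(\op{G}_\ell, \g)$ into $H^1_{\op{nr}}(\op{G}_\ell, \op{W}_p(E))$ because $E$ has good reduction there (so both coefficient modules are unramified and the natural map of cohomologies preserves inflation from the unramified quotient). At $\ell = p$, Lemma~\ref{lemma 4.1} places the image of $\mathcal{N}_p$ inside $H^1_{\bf{f}}(\op{G}_p, \op{W}_p(E))$. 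At $\ell\in S$ no local condition is imposed on either side, matching the definition of $H^1_S$.

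The second step is the standard relaxation exact sequence
\[0\to H^1_\emptyset(\Q, \op{W}_p(E))\to H^1_S(\Q,\op{W}_p(E))\to \bigoplus_{\ell\in S} \frac{H^1(\op{G}_\ell, \op{W}_p(E))}{H^1_{\op{nr}}(\op{G}_\ell,\op{W}_p(E))}.\]
Passing to $p$-torsion and applying the hypothesis $H^1_\emptyset(\Q,\op{W}_p(E))=0$ yields an injection of $H^1_S(\Q,\op{W}_p(E))[p]$ into the $p$-torsion of the right-hand direct sum. A local Tate-duality computation (whose Pontryagin dual identifies the local quotient with $H^1_{\op{nr}}(\op{G}_\ell, T_p\op{Ad}_p^0(E)(1))$, whose mod-$p$ reduction has dimension controlled by $\g^*$) then pins down $\dim_{\F_p}\bigl(H^1(\op{G}_\ell,\op{W}_p(E))/H^1_{\op{nr}}\bigr)[p] = \dim H^0(\op{G}_\ell, \g^*)$. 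Combining the steps gives $\dim H^1_{\angp}(\op{G}_{S\cup\{p\}}, \g)\leq \sum_{\ell\in S}\dim H^0(\op{G}_\ell, \g^*)$.

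Finally, Proposition~\ref{prop:selmer vs dselmer} asserts
\[\dim H^1_{\angp}(\op{G}_{S\cup\{p\}},\g)-\dim H^1_{\angp}(\op{G}_{S\cup\{p\}},\g^*)=\sum_{\ell\in S}\dim H^0(\op{G}_\ell,\g^*).\]
Since the dual-Selmer dimension is non-negative, this is a lower bound matching the upper bound just obtained; so the inequality is an equality, forcing $H^1_{\angp}(\op{G}_{S\cup\{p\}},\g^*)=0$, which is (2), and $\dim H^1_{\angp}(\op{G}_{S\cup\{p\}},\g)=\sum_{\ell\in S}\dim H^0(\op{G}_\ell,\g^*)$, which is (1). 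The main obstacle is the local step at $\ell\in S$: converting the $p$-torsion count on the divisible group $\op{W}_p(E)$ into the characteristic-$p$ invariant $\dim H^0(\op{G}_\ell,\g^*)$ is the only place where genuine duality input is needed, and care is required because $\op{W}_p(E)$ is cofinitely generated while $\g^*$ is $p$-torsion; the devissage works cleanly thanks to the self-duality of $\op{Ad}_p^0(E)$ under the trace pairing.
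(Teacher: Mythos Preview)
Your overall architecture matches the paper's exactly: bound $\dim H^1_{\angp}(\op{G}_{S\cup\{p\}},\g)$ from above via the vanishing of $H^1_\emptyset(\Q,\op{W}_p(E))$, then squeeze against Proposition~\ref{prop:selmer vs dselmer}. The execution of the upper bound differs. You embed all of $H^1_{\angp}(\op{G}_{S\cup\{p\}},\g)$ into the relaxed Bloch--Kato group $H^1_S(\Q,\op{W}_p(E))[p]$ and run the relaxation exact sequence with divisible coefficients, which pushes the local computation at $\ell\in S$ into the cofinitely generated module $\op{W}_p(E)$. The paper instead introduces the subgroup $\mathcal{T}\subset H^1_{\angp}(\op{G}_{S\cup\{p\}},\g)$ of classes unramified at every $\ell\in S$, embeds $\mathcal{T}$ directly into $H^1_\emptyset(\Q,\op{W}_p(E))$ (hence $\mathcal{T}=0$), and then runs the relaxation sequence \emph{entirely in characteristic $p$}: the local term becomes $H^1(\op{G}_\ell,\g)/H^1_{\op{nr}}(\op{G}_\ell,\g)$, whose dimension is $\dim H^0(\op{G}_\ell,\g^*)$ by the finite-module Euler characteristic formula and local Tate duality. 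This sidesteps the devissage you yourself flag as the main obstacle. One caution on your version: the asserted \emph{equality} $\dim\bigl(H^1(\op{G}_\ell,\op{W}_p(E))/H^1_{\op{nr}}\bigr)[p]=\dim H^0(\op{G}_\ell,\g^*)$ need not hold in general, since the mod-$p$ reduction of the inertia invariants of the lattice can be strictly smaller than the inertia invariants of the mod-$p$ module; your duality argument only delivers $\leq$. Fortunately $\leq$ is all the squeeze requires, so your proof still goes through.
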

\begin{proof}
Throughout this proof, we set $\bar{\rho}:=\bar{\rho}_{E,p}$. Identify $\op{W}_p(E)[p]$ with $\g$, and note that since $\bar{\rho}$ is assumed to be irreducible, it follows that $H^0(\Q, \g)=0$, and as a result, the natural map 
\begin{equation}\label{natural map}H^1(\Q, \g)\longrightarrow H^1(\Q, \op{W}_p(E))\end{equation} is injective. Let $\mathcal{T}\subset H^1_{\angp}(\op{G}_{S\cup\{p\}}, \g)$ consist of cohomology classes $f$ that are unramified at all primes $\ell\neq p$. In other words,
\[\mathcal{T}:=\op{ker}\left\{ H^1(\op{G}_{S\cup\{p\}}, \g)\longrightarrow \left(\bigoplus_{\ell\in S} \frac{H^1(\op{G}_\ell, \g)}{H^1(\op{G}_\ell/\op{I}_\ell, (\g)^{\op{I}_\ell})}\right)\oplus \left( \frac{H^1(\op{G}_p, \g)}{\mathcal{N}_p}\right) \right\}.\]We show that the above injection \eqref{natural map} restricts to a map
\[\mathcal{T}\longrightarrow H^1_\emptyset\left(\Q, \op{W}_p(E)\right).\] It suffices to observe that the local conditions defining $\mathcal{T}$ as a subspace of $H^1(\Q, \g)$ are compatible with those defining $H^1_\emptyset\left(\Q, \op{W}_p(E)\right)$. Clearly, this is the case for $\ell\neq p$, and for $\ell=p$, the assertion follows from Lemma \ref{lemma 4.1}. As a result, the assumption that $H^1_\emptyset\left(\Q, \op{W}_p(E)\right)=0$ implies that $\mathcal{T}=0$.
On the other hand, consider the short exact sequence
\[0\rightarrow \mathcal{T}\rightarrow H^1_{\angp}(\op{G}_{S\cup \{p\}}, \g)\rightarrow \bigoplus_{\ell\in S} \frac{H^1(\op{G}_\ell, \g)}{H^1_{\op{nr}}(\op{G}_\ell, \g)},\] from which we obtain the inequality
\[\begin{split}\dim H^1_{\angp}(\op{G}_{S\cup \{p\}}, \g) &\leq \dim \mathcal{T}+\sum_{\ell\in S} \left(\dim H^1(\op{G}_\ell, \g)-\dim H^1_{nr}(\op{G}_\ell, \g)\right)\\
&\leq \dim \mathcal{T}+\sum_{\ell\in S} \left(\dim H^1(\op{G}_\ell, \g)-\dim H^0(\op{G}_\ell, \g)\right)\\
& = \dim \mathcal{T}+\sum_{\ell\in S} \dim H^0(\op{G}_\ell, \g^*),\\
& = \sum_{\ell\in S} \dim H^0(\op{G}_\ell, \g^*),
\end{split}\]
where in the last step, we invoke $\dim \mathcal{T}=0$.
\par Proposition \ref{prop:selmer vs dselmer} asserts that 
\[\dim H^1_{\angp}(\op{G}_{S\cup \{p\}}, \g)=\dim H^1_{\angp}(\op{G}_{S\cup\{p\}}, \g^*)+\sum_{\ell\in S} \dim H^0(\op{G}_\ell, \g^*).\] 
Therefore, $H^1_{\angp}(\op{G}_{S\cup\{p\}}, \g^*)=0$ and 
\[\dim H^1_{\angp}(\op{G}_{S\cup \{p\}}, \g)=\sum_{\ell\in S} \dim H^0(\op{G}_\ell, \g^*).\] 
\end{proof}
\subsection{Congruence primes and vanishing of the Bloch-Kato Selmer group}
\par Next, we discuss conditions for the vanishing of $H^1_\emptyset(\Q, \op{W}_p(E))$. Let $N$ denote the conductor of $E$, and $f$ the Hecke eigencuspform of weight $2$ associated with $E$. We introduce the notion of a congruence prime, which will be key in our analysis of the vanishing of the Selmer group $H^1_{\emptyset}\left(\Q, \op{W}_p(E)\right)$.

\begin{Definition}
We say that $p$ is a \textit{congruence prime} for $f$ if there exists a
newform $f_0$ of weight $2$ and level $d|N$ such that:
\begin{enumerate}
    \item $f_0$ has character lifting the trivial one,
    \item $f_0$ is not Galois conjugate to $f$,
    \item $\bar{\rho}_{f_0,\lambda}\simeq \bar{\rho}_{f, \lambda}$ for some prime $\lambda|p$ of $\bar{\Q}$.
\end{enumerate}
Let $\op{Cong}(f)$ denote the set of congruence primes. A prime $p$ is a \textit{strict} congruence prime if there is a newform satisfying the above conditions with level $N$. Denote by $\op{Cong}_N(f)\subseteq  \op{Cong}(f)$ the subset of strict congruence primes.

\end{Definition}

The following result gives a relationship between congruence primes and the vanishing of the Selmer group $H^1_{\emptyset}\left(\Q, \op{W}_p(E)\right)$, it is largely based on results in \cite{DFG}
\begin{Proposition}\label{prop BK}
Let $p$ be a prime, assume that
that:
\begin{enumerate}
    \item $\bar{\rho}_{f,p}$ is absolutely irreducible,
    \item $p>2$,
    \item either $N>1$ or $p>3$,
    \item $p\nmid N$, 
    \item $\ell \not \equiv 1\mod{p}$ for all primes $\ell|N$,
    \item $\bar{\rho}_{f,p}$ is ramified at all primes $\ell|N$.
\end{enumerate}
Then $H^1_\emptyset(\Q, \op{W}_p(E))\neq 0$ if and only if $p\in \op{Cong}_N(f)$.
\end{Proposition}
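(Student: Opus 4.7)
The plan is to deduce this directly from the work of Diamond--Flach--Guo via an $R = \mathbb{T}$ identification at minimal level, combined with the standard translation between Bloch--Kato Selmer groups of the adjoint and congruence modules of Hecke algebras.

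The first step is to interpret $H^1_\emptyset(\Q, \op{W}_p(E))$ in deformation-theoretic language. Under the hypotheses listed, I would verify that the local condition defining $H^1_\emptyset$ at each bad prime $\ell \mid N$ (the unramified condition, equal to $H^1_{\bf f}$ by the divisibility remark following the definition of $H^1_\Sigma$) coincides with the ``minimally ramified'' local condition cut out by the type of $\bar\rho_{f,p}$ at $\ell$. Concretely, $\ell \not\equiv 1 \bmod p$ combined with the ramification of $\bar\rho_{f,p}$ at $\ell$ rules out nontrivial ramified deformations corresponding to level raising, so the two conditions cut out the same subspace of $H^1(\op{G}_\ell, \g)$. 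At $p$, the flat condition defining $H^1_{\bf f}$ matches the tangent space $\mathcal{N}_p$ of the functor $\mathcal{C}_p$ by Lemma \ref{lemma 4.1}. Hence $H^1_\emptyset(\Q, \op{W}_p(E))$ is (up to Pontryagin duality) the adjoint Selmer group associated to the minimally ramified flat deformation functor for $\bar\rho = \bar\rho_{f,p}$.

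The second step is to import the $R = \mathbb{T}$ theorem of \cite{DFG}. The hypotheses (1)--(6) are exactly those required to conclude that the minimally ramified flat deformation ring $R = R_{\bar\rho}^{\min}$ with determinant $\chi$ is isomorphic to the completion $\mathbb{T}_{\mathfrak m}$ of the $p$-adic Hecke algebra acting on weight-$2$ cusp forms of level $\Gamma_0(N)$, at the maximal ideal $\mathfrak m$ attached to $f$. Composing this isomorphism with the surjection $\mathbb{T}_{\mathfrak m} \twoheadrightarrow \Z_p$ coming from $f$, Wiles' numerical criterion as refined in \cite{DFG} identifies $H^1_\emptyset(\Q, \op{W}_p(E))$ with the Pontryagin dual of the congruence module of $\mathbb{T}_{\mathfrak m}$ relative to the quotient $\Z_p$.

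To finish, the congruence module of $\mathbb{T}_{\mathfrak m}$ is nonzero precisely when $\mathbb{T}_{\mathfrak m}$ contains a characteristic-zero prime other than the one cut out by $f$. Such a prime corresponds to a newform $f_0$ of weight $2$, trivial character, level dividing $N$, with $\bar\rho_{f_0, p} \simeq \bar\rho_{f, p}$ and $f_0$ not Galois conjugate to $f$. The ramification hypothesis (6) forbids level lowering at each $\ell \mid N$, so the level of $f_0$ is forced to equal $N$ exactly; the existence of such an $f_0$ is thus equivalent to $p \in \op{Cong}_N(f)$. The main obstacle is the bookkeeping in step one --- matching the Bloch--Kato ``empty'' condition at primes of bad reduction with the minimal-ramification condition used on the automorphic side --- which is precisely what hypotheses (5) and (6) buy. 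Once this matching is pinned down, the equivalence follows formally from the DFG machinery applied as a black box.
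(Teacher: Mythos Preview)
Your outline is correct and follows the standard Diamond--Flach--Guo route: match the local conditions defining $H^1_\emptyset$ with those of the minimally ramified flat deformation problem, invoke $R=\mathbb{T}$ to identify the Selmer group with a congruence module, and use hypothesis (6) to rule out level lowering so that any congruent newform has level exactly $N$. The paper itself does not give a proof at all---it simply cites \cite[Proposition 17]{westonexplicit} as a special case---so what you have written is essentially a sketch of the argument underlying that citation rather than a different approach.

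One minor point of bookkeeping: in your first step you say the unramified condition at $\ell\mid N$ equals $H^1_{\mathbf f}$ ``by the divisibility remark.'' Note that the paper's remark goes the other way: $H^1_{\mathbf f}\subseteq H^1_\emptyset$, with equality when $H^0(\op{I}_\ell,\op{W}_p(E))$ is divisible. For the application you want to match $H^1_{\op{nr}}$ with the minimal local condition directly; hypotheses (5) and (6) are what make this work (they force $H^0(\op{G}_\ell,\g^*)=0$, so there is no room for extra ramified or level-raising classes), and you should phrase it that way rather than routing through the $H^1_{\mathbf f}$ comparison. This is cosmetic and does not affect the validity of your argument.
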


\begin{proof}
This is a special case of \cite[Proposition 17]{westonexplicit}. 
\end{proof}
\begin{Lemma}\label{lemma local vanishing}
Suppose that $\ell \in S$, $p\geq 3$ and $\ell\not \equiv \pm 1\mod{p}$. Then, $H^0(\op{G}_\ell, \g^*)=0$ if and only if $\bar{\rho}$ is ramified at $\ell$.
\end{Lemma}
\begin{proof}
We refer the reader to \cite[Lemma 11]{westonexplicit}. 
\end{proof}

\begin{Lemma}\label{weston lemma 12}
    Let $\ell\in S$ be a prime such that $\ell\neq p$ and $\ell\not \equiv \pm 1\mod{p}$ and suppose that $H^0(\op{G}_\ell, \g^*)\neq 0$. Then it follows that $p\in \op{Cong}(f)$.
\end{Lemma}
\begin{proof}
    This result is a consequence of \cite[Lemma 12]{westonexplicit}. 
\end{proof}
\begin{Th}\label{section 4 main thm}
Let $E$ be an elliptic curve defined over $\Q$ with squarefree conductor $N$. Let $\Sigma(E)$ be the set of primes $p$ such that one of the following conditions is satisfied:
\begin{enumerate}
    \item $p\leq 3$,
     \item $\bar{\rho}_{f,p}$ is not absolutely irreducible
    \item $p\mid N $,
    \item there is a prime $\ell|N$, such that $\ell\equiv \pm 1\mod{p}$,
    \item $p\in \op{Cong}(f)$.
\end{enumerate}
Then, for $p\notin \Sigma(E)$, the geometric deformation ring $\mathcal{R}_{E,p}$ is isomorphic to $\Z_p$.
\end{Th}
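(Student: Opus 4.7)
The plan is to apply Corollary~\ref{corollary def zp}, which reduces the claim to verifying two things for every $p\notin\Sigma(E)$: condition $(\star)$, namely $H^0(\op{G}_\ell,\g^*)=0$ for all $\ell\in S$, and the vanishing $H^1_{\angp}(\op{G}_{S\cup\{p\}},\g^*)=0$. The five exclusions defining $\Sigma(E)$ are arranged exactly to supply the hypotheses of Propositions~\ref{prop local vanishing}, \ref{BK to res selmer} and~\ref{prop BK} needed to check these.

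First I would dispose of $(\star)$. For $\ell\in S$ the assumption $p\geq 5$ together with exclusion~(4) yields $p\geq 3$ and $\ell\not\equiv\pm 1\pmod p$, so Proposition~\ref{prop local vanishing} applies once we know $\bar{\rho}:=\bar{\rho}_{E,p}$ is ramified at $\ell$. This ramification is where the full (rather than strict) congruence-prime exclusion~(5) enters: by Ribet's level-lowering theorem, applied in the squarefree-conductor setting with $\bar{\rho}_{f,p}$ absolutely irreducible and $p\nmid N$ (exclusions~(2) and~(3)), an unramified $\ell\mid N$ would produce a newform $f_0$ of level a proper divisor of $N$ with $\bar{\rho}_{f_0,p}\simeq\bar{\rho}_{f,p}$, forcing $p\in\op{Cong}(f)$ and contradicting~(5). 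With ramification in hand, Proposition~\ref{prop local vanishing} delivers $(\star)$.

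Next I would handle the global vanishing. The ramification conclusion just established, together with exclusions (1)--(4), verifies every hypothesis of Proposition~\ref{prop BK}: $\bar{\rho}_{f,p}$ is absolutely irreducible, $p>3$, $p\nmid N$, $\ell\not\equiv 1\pmod p$ for every $\ell\mid N$, and $\bar{\rho}_{f,p}$ is ramified at every $\ell\mid N$. Since $\op{Cong}_N(f)\subseteq\op{Cong}(f)$, exclusion~(5) gives $p\notin\op{Cong}_N(f)$, so Proposition~\ref{prop BK} yields $H^1_\emptyset(\Q,\op{W}_p(E))=0$. Feeding this into Proposition~\ref{BK to res selmer}, whose two hypotheses (absolute irreducibility of $\bar{\rho}_{E,p}$, and the Bloch--Kato vanishing just obtained) both hold, gives $H^1_{\angp}(\op{G}_{S\cup\{p\}},\g^*)=0$. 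Corollary~\ref{corollary def zp} then produces $\mathcal{R}_{E,p}\simeq\Z_p$.

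The only step that is not a mechanical invocation of a previously cited result is the level-lowering input used to promote the exclusion $p\notin\op{Cong}(f)$ into ramification at every $\ell\mid N$; this is the reason for imposing~(5) in its non-strict form, since Proposition~\ref{prop BK} on its own only rules out \emph{strict} congruence primes. Everything else is bookkeeping: matching each exclusion in $\Sigma(E)$ to the hypothesis of the proposition into which it is fed.
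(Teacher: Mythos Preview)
Your proposal is correct and follows essentially the same route as the paper: reduce via Corollary~\ref{corollary def zp} to verifying $(\star)$ and the vanishing of the dual Selmer group, use Proposition~\ref{prop local vanishing} together with level-lowering to obtain ramification at every $\ell\mid N$ (the paper phrases this as a contradiction and cites \cite[Lemma~12]{westonexplicit}, which is precisely the Ribet input you name), and then feed Proposition~\ref{prop BK} into Proposition~\ref{BK to res selmer}. The only cosmetic difference is that you argue directly while the paper argues by contradiction.
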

\begin{proof}
\par
Let $p\notin \Sigma(E)$ and assume by way of contradiction that $\mathcal{R}_{E,p}\not\simeq \Z_p$.  According to Corollary \ref{corollary def zp}, there are two possibilities:
\begin{enumerate}
    \item $H^0(\op{G}_\ell, \g^*)\neq 0$ for some prime $\ell \in S$,
    \item $H^1_{\angp}(\op{G}_{S\cup \{p\}}, \g^*)\neq 0$.
\end{enumerate}
Suppose that the first of the above possibilities does hold. According to Lemma \ref{lemma local vanishing}, the residual representation $\bar{\rho}$ must be unramified at $\ell$. However, according to Lemma \ref{weston lemma 12}, it follows that $p\in \op{Cong}(f)$. However, since $p\notin \Sigma(E)$, it follows therefore that $H^0(\op{G}_\ell, \g^*)=0$ for all primes $\ell \in S$. Therefore, the only possibility is that \[H^1_{\angp}(\op{G}_{S\cup \{p\}}, \g^*)\neq 0.\] Note that since $p\notin \Sigma(E)$, the residual representation $\bar{\rho}$ is absolutely irreducible. Hence, it follows from Proposition \ref{BK to res selmer} that $H^1_\emptyset(\Q, \op{W}_p(E))\neq 0$. Since $H^0(\op{G}_\ell, \g^*)=0$ for all primes $\ell \in S$, it follows from Lemma \ref{lemma local vanishing} that $\bar{\rho}$ is ramified at all primes $\ell\in S$. Proposition \ref{prop BK} then implies that $p\in \op{Cong}_N(f)$, which is not possible since $p\notin \Sigma(E)$. The contradiction implies that $\mathcal{R}_{E,p}\simeq \Z_p$ for all primes $p\notin \Sigma(E)$.
\end{proof}

\begin{Corollary}\label{section 4 last}
Let $E$ be an elliptic curve over $\Q$ with squarefree conductor $N$ and without complex multiplication. Then, for all but finitely many primes $p$, the Galois deformation ring $\mathcal{R}_{E,p}$ is isomorphic to $\Z_p$.
\end{Corollary}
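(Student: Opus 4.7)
The plan is to reduce the corollary to Theorem \ref{section 4 main thm} and simply check that each of the five conditions defining the exceptional set $\Sigma(E)$ contributes only finitely many primes when $E$ is non-CM with squarefree conductor $N$. Once this is established, $\Sigma(E)$ is a finite set and the conclusion $\mathcal{R}_{E,p} \simeq \Z_p$ for $p \notin \Sigma(E)$ is immediate from the theorem.

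Going through the conditions in turn: Condition (1) obviously gives only the primes $\{2,3\}$. Condition (3) excludes only the finitely many prime divisors of $N$. For condition (4), each prime $\ell \mid N$ is fixed, so the primes $p$ with $\ell \equiv \pm 1 \pmod p$ are precisely the prime divisors of $\ell - 1$ or $\ell + 1$, a finite set; taking the union over the finitely many $\ell \mid N$ still yields a finite set. Condition (2) is the first substantive one: since $E$ is non-CM, Serre's open image theorem implies that $\bar{\rho}_{E,p}$ is surjective, and in particular absolutely irreducible, for all but finitely many primes $p$, so the contribution of this condition is finite.

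The main obstacle is handling condition (5), the finiteness of the congruence set $\op{Cong}(f)$ where $f$ is the weight-$2$ newform attached to $E$. I would argue this as follows. The set of newforms $f_0$ of weight $2$ and level $d \mid N$ with trivial character is finite, since there are only finitely many divisors of $N$ and each space $S_2(\Gamma_0(d))^{\op{new}}$ is finite-dimensional. For each such $f_0$ distinct from (and not Galois conjugate to) $f$, I want to show that there are only finitely many primes $p$ with $\bar{\rho}_{f_0,\lambda} \simeq \bar{\rho}_{f,\lambda}$ for some $\lambda \mid p$. The standard trick is to pick any auxiliary prime $q \nmid N$ at which the Hecke eigenvalues differ: since $f$ and $f_0$ are not Galois conjugate, there is some prime $q$ for which $a_q(f) - a_q(f_0) \neq 0$ in $\bar{\Z}$ (this follows from strong multiplicity one), and any congruence prime $p$ must divide the nonzero algebraic integer $a_q(f) - a_q(f_0)$, hence its norm from the relevant number field to $\Z$. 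This yields only finitely many such $p$. Summing over the finite list of $f_0$ gives $\op{Cong}(f)$ finite.

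Combining all five estimates, $\Sigma(E)$ is finite, and invoking Theorem \ref{section 4 main thm} completes the proof. The only delicate ingredients invoked beyond the theorem itself are Serre's open image theorem (for the non-CM hypothesis) and strong multiplicity one (for the congruence primes).
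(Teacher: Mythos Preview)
Your proof is correct and follows the same overall approach as the paper: reduce to Theorem~\ref{section 4 main thm} and verify that each of the five conditions defining $\Sigma(E)$ excludes only finitely many primes, with Serre's open image theorem handling condition~(2). The paper's own proof is extremely terse---it invokes Serre and then simply asserts that ``it is easy to see that the set of primes $\Sigma(E)$ \dots\ is finite''---so your argument is a fleshed-out version of the same strategy rather than a different one. In particular, your treatment of condition~(5) via strong multiplicity one (choosing an auxiliary prime $q\nmid N$ with $a_q(f)\neq a_q(f_0)$ and bounding congruence primes by the norm of the difference) is exactly the standard justification that the paper leaves implicit.
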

\begin{proof}
It follows from Serre's open image theorem that the set of primes $p$ at which the residual representation $\bar{\rho}_{E,p}$ is not absolutely irreducible is finite. With this it is clear that the set of primes $\Sigma(E)$ in the statement of Theorem \ref{section 4 main thm} is finite and the result follows from the theorem.
\end{proof}
\section{Statistics for a fixed prime $p$ and varying elliptic curve $E_{/\Q}$}\label{section 5}
In this section, we are able to provide some partial answers to the dual problem. Namely, we fix a prime $p$ throughout and let $E$ vary over all non-CM elliptic curves over $\Q$. We will need to assume that $p\geq 5$. 
\subsection{The congruence number and modular degree}
\begin{Definition}\label{section 5 defn}Let $\mathscr{E}_p$ be the set of elliptic curves $E_{/\Q}$ for which the following conditions are satisfied:
\begin{enumerate}
    \item\label{s5:1} $E$ has good reduction at $p$,
    \item\label{s5:2} the conductor $N$ of $E$ is squarefree,
    \item\label{s5:3} the residual representation $\bar{\rho}=\bar{\rho}_{E,p}$ is absolutely irreducible,
    \item\label{s5:4} $\ell\not\equiv \pm 1\mod{p}$ for all primes $\ell\mid N$.
\end{enumerate}
\end{Definition}
\begin{Remark}
It is shown by J. Cremona and M. Sadek in \cite{cremonasadek} that the proportion of elliptic curves over $\Q$ ordered by height satisfying \eqref{s5:1} (resp. \eqref{s5:2}) is $(1-\frac{1}{p})$ (resp. $\zeta(10)/\zeta(2)\approx$ 60.85\%). On the other hand, it is a result of W.Duke that $\bar{\rho}_{E,p}$ is irreducible for $100\%$ of elliptic curves, see \cite{Duke}.
\end{Remark}

\begin{Remark}
We observed computationally for elliptic curves with conductor at most 4000 that the conditions (1), (2) and (4) appear to imply (3).  We are not immediately aware of any obvious reason for this.
\end{Remark}

As $E$ varies in the set $\mathscr{E}_p$, one would like to understand how often is the geometric deformation ring $\mathcal{R}_{E,p}$ isomorphic to $\Z_p$? Let $f$ be the modular form corresponding to $E$. Note that according to Theorem \ref{section 4 main thm}, this is the case when $p\notin \op{Cong}(f)$. Let $\mathscr{E}_p'$ be the subset of $\mathscr{E}_p$ for which this additional condition is satisfied.
\par Let $E$ be an elliptic curve defined over $\Q$, there is a unique minimal Weierstrass equation $y^2=x^3+ax+b$, where $(a,b)\in \Z^2$ is such that $\op{gcd}(a^3 , b^2)$ is not divisible by any twelfth power. The \textit{height of }$E$ is given by $H(E) := \max\left(\abs{a}^3, b^2\right)$.  Ordering elliptic curves over $\Q$  according to height, for any subset $\mathscr{S}\subseteq \mathscr{E}_p$ and $x>0$, let $\mathscr{S}(x)$ consist of all elliptic curves $E\in \mathscr{S}$ with height $\leq x$. The lower density of $\mathscr{S}$ is given by
\[\mathfrak{d}(\mathscr{S}):=\liminf_{x\rightarrow \infty} \frac{\# \mathscr{S}(x)}{\# \mathscr{E}_p(x)}.\]

From a statistical point of view, one would like to characterize $\mathfrak{d}(\mathscr{E}_p')$, i.e., the lower density for the proportion of curves $E\in \mathscr{E}_p$ such that $\mathcal{R}_{E,p}\simeq \Z_p$.
\par We are not able to prove unconditional results, instead, we resort to heuristics and explicit calculations. We recall the notions of \textit{modular degree} and \textit{congruence number} associated to an elliptic curve $E_{/\Q}$.
\begin{Definition}
Let $E_{/\Q}$ be an elliptic curve of conductor $N$ and $\phi_E:X_0(N)\rightarrow E$ the modular homomorphism associated to $E$. The modular degree $m_E$ is the degree of $\phi_E$.
\end{Definition}
Let $f$ be the normalized newform on $\Gamma_0(N)$ associated to $E$. The primes $\op{Cong}(f)$ are exactly those that divide the \emph{congruence number} (cf. \cite{agashe ribet} for the definition). Let $r_E$ the congruence number. 
\par Given $r\in \Z_{\geq 1}$, set $r^{(p)}$ to denote $|r|_p^{-1}$, i.e., the $p$-part of $r$. Since $p\nmid N$, it follows from \cite[Theorem 2.1]{agashe ribet} that $r_E^{(p)}=m_E^{(p)}$. Note that an elliptic curve $E\in \mathscr{E}_p$ belongs to $\mathscr{E}_p'$, precisely when $r_E^{(p)}=1$. Therefore, we may formulate the problem in terms of the distribution of the $p$-primary part of the modular degree
\[\liminf_{x\rightarrow \infty} \frac{\#\mathscr{E}_p'(x)}{\#\mathscr{E}_p(x)}=\liminf_{x\rightarrow \infty} \frac{\#\{E\in \mathscr{E}_p(x)\mid m_E^{(p)}=1\}}{\#\mathscr{E}_p(x)}.\]

The probablity that $p\mid m_E^{(p)}$ for odd primes $p$ is given by Cohen-Lenstra heuristics, see \cite[p.499]{watkins}. It follows from such heuristics that the proportion of curves $m_E^{(p)}=1$ is
\[\prod_i \left(1-\frac{1}{p^{i}}\right)=1-\frac{1}{p}-\frac{1}{p^2}+\frac{1}{p^3}+\dots.\] Watkins obtains some evidence for these heuristics in loc. cit. One can only expect that the stipulation that $m_E^{(p)}=1$ be largely independent from the other conditions \eqref{s5:1}-\eqref{s5:4}.
\par In the rest of this paper, we show through explicit calculation that one may expect that $m_E^{(p)}=1$ holds for a large proportion of curves, especially as $p\rightarrow\infty$. This leads us to expect the following,
\begin{enumerate}
    \item for any odd prime $p$, 
    \[\liminf_{x\rightarrow \infty} \frac{\# \mathscr{E}_p'(x)}{\# \mathscr{E}_p(x)}>0.\]
    \item As $p\rightarrow \infty$, 
     \[\lim_{p\rightarrow \infty}\left(\liminf_{x\rightarrow \infty} \frac{\# \mathscr{E}_p'(x)}{\# \mathscr{E}_p(x)}\right)\rightarrow 1.\] 
\end{enumerate}
\subsection{Statistics for congruence primes}
\par For newforms $f,g$ of levels $M,N$, we say that a rational prime $p$ is a {\it congruence prime} for $f$ and $g$ if there is a prime $\mathfrak p$ of $\bar{\Q}$ containing $p$ such that there is a congruence of Fourier coefficients $$a_\ell(f) \equiv a_{\ell}(g) \pmod{\mathfrak p}$$ for all primes $\ell$ not dividing $pMN$.  

\par Let $E$ be an elliptic curve with conductor $N$ and associated newform $f \in S_2(\Gamma_0(N))$. Recall that a prime $p$ is a {\it congruence prime} for $E$ if there is a weight $2$ newform $g$ of level dividing $N$ such that $p$ is a congruence prime for $f_E$ and $g$. We
say that a congruence prime is {\it strict} (resp.\ {\it proper}) if the level of $g$ can be taken equal to $N$ (resp.\ strictly less than $N$).  In deformation theory, strict congruence primes correspond to global obstructions while proper congruence primes correspond to local
obstructions.

\par The behavior of $2$ as a congruence prime was studied in \cite{CalegariEmerton}, where they proved that it is a congruence prime for the vast majority of eigenforms.  The behavior for odd primes is less clear.  We computed all congruence primes for the 13,352 isogeny
classes of elliptic curves of
conductor at most $4000$.  Computations were done in Magma using the package of Montes to deal with the large number fields which arise as coefficient fields.  (Attempts to extend the
computations to larger conductor were unsuccessful due to the extremely large coefficient
fields which begin to arise at that stage.)

\begin{tabular}{c|ccc}
$p$ & \% proper & \% strict & \% cong \\ \hline
2 &	82.9 &	99.1 &	99.5 \\
3 &	49.1 &	61.7 &	75.4 \\
5 &	16.3 &	23.8 &	37.3 \\
7 &	7.8 &	13.2 &	20.0 \\
11 &	2.5 &	4.8	 &7.2 \\
13 &	1.6 &	3.4	& 4.9 \\
17 &	0.7 &	2.1	& 2.8 \\
19 &	0.5 &	1.3	& 1.8 \\
23 &	0.3 &	0.9	& 1.1 \\
29 &	0 &	0.6	& 0.6 \\
31 &	0.1 &	0.4	& 0.4 \\
37 &	0 &	0.3	& 0.4 \\
41 &	0 &	0.3	& 0.3 \\
43 & 0 &	0.1 &	0.1 \\
47 &	0 &	0.1 &	0.1
\end{tabular}

The proportions for all primes between 53 and 97 were never more than $0.1\%$.

It is difficult to see any immediate pattern in this data beyond the not surprising observation that small congruence primes are relatively common and large congruence primes are rare.  The data for the 179 isogency classes of elliptic curves of prime conductor in our sample (in which case all congruences are necessarily strict) was perhaps more revealing.\

\begin{tabular}{c|ccc}
$p$ & \% cong & \% Watkins $S_3$ & Watkins CL \\ \hline
3 &	40.2 & 44.8 & 44.0\\
5 &	22.9 & 24.2 & 24.0\\
7 &	11.2 & 16.3 & 16.3\\
11 & 9.5 & 9.9 & 9.9\\
13 & 7.3 & 8.1 & 8.3 \\
17 &	3.9	 & 6.2 & 6.2\\
19 &	5.6	 & 5.3 & 5.5\\
23 &	3.4	& 4.7 &  4.5\\
29 &	3.4	& 3.4 & 3.6\\
31 &	2.8	& 3.4 & 3.3\\
37 &	1.1 & 2.7 & 2.8	
\end{tabular}

Here we list also the proportion of elliptic curves $E$ in Watkin's set $S_3$ (consisting of 52878 non-Setzer--Neumann elliptic curves of prime
discriminant of absolute value less than $10^7$, as computed by
\cite{BrumerMcGuinness}) with $p$ dividing the modular degree, as well
as the Cohen--Lenstra prediction Watkins developed for that case.  Given that our data set is quite small, the fit is certainly respectably close.

Since our results are concerned with elliptic curves of squarefree conductor, we also report the proportions in that case, which are notably higher than the overall average, at least for larger $p$.
The data here is for the 4931 isogeny classes of elliptic curves of squarefree conductor $\leq 4000$.

\begin{tabular}{c|c}
$p$ & \% cong \\ \hline
2 &	98.8 	\\
3 &	63.7  \\
5 &	37.9  \\
7 &	21.6  \\
11 & 9.3  \\
13 & 6.8	 \\
17 & 4.1  \\
19 & 2.8	  \\
23 & 1.9	  \\
29 & 1.2	 
\end{tabular}

Finally, and perhaps most relevantly, we note that individual elliptic curves tend to have very few congruence primes.

\begin{tabular}{c|ccc}
$n$ & \% $E$ with $n$ proper cp & \% $E$ with $n$ strict cp & \% $E$ with $n$ cp \\ \hline
0 & 4.3\% & 0.3\% & 0.1\% \\
1 & 40.8\% & 16.6\% & 8.2\% \\
2 & 44.1\% & 55.7\% & 42.2\% \\
3 & 10.2\% & 25.0\% & 38.8\% \\
4 & 0.6\% & 2.3\% & 9.7\% \\
5 & 0.0\% & 0.1\% & 0.9\% \\
6 & 0 & 0 & 0.0\%
\end{tabular}

There were a total of six elliptic curves in the sample with six
congruence primes.

Putting everything together, we computed for small primes $p$ the
proportion of isogeny classes of conductor $\leq 4000$ lying in our 
set $\mathcal{E}_p$, and then also what proportion of those further satisfy
$p \nmid m_E$.

\begin{tabular}{c|cc}
$p$ & \% in ${\mathcal E}_p$ & \% in ${\mathcal E}_p$ with $p \nmid m_E$ \\ \hline
5 & 10.3\% & 6.6\% \\
7 & 17.2\% & 13.5\% \\
11 & 25.1\% & 22.3\% \\
13 & 28.6\% & 26.7\% \\
17 & 30.4\% & 29.2\% \\
19 & 30.9\% & 30.0\% \\
23 & 32.1\% & 31.5\% \\
29 & 33.3\% & 33.0\%
\end{tabular}

\end{document}